\DeclareMathAlphabet{\mathpzc}{OT1}{pzc}{m}{it}
\newtheorem{theorem}{Theorem}[section]
\newtheorem*{theorem1}{Theorem 1}
\newtheorem*{theorem2}{Theorem 2}
\newtheorem{proposition}[theorem]{Proposition}
\newtheorem{lemma}[theorem]{Lemma}
\theoremstyle{definition}
\newtheorem{definition}[theorem]{Definition}
\theoremstyle{remark}
\newtheorem{remark}[theorem]{Remark}
\newcommand{\CC}{{\mathcal C}}
\newcommand{\CJ}{{\mathcal J}}
\newcommand{\CK}{{\mathcal K}}
\newcommand{\CO}{{\mathcal O}}
\newcommand{\SK}{{\mathscr K}}
\newcommand{\SQ}{{\mathscr Q}}
\newcommand{\SZ}{{\mathscr Z}}
\newcommand{\DZ}{{\mathbb Z}}
\newcommand{\DN}{{\mathbb N}}
\newcommand{\DQ}{{\mathbb Q}}
\newcommand{\ch}{{\operatorname{char}\, }}
\newcommand{\End}{{\operatorname{End}}}
\newcommand{\rad}{{\operatorname{rad}}}
\newcommand{\height}{{\operatorname{ht}}}
\newcommand{\im}{{\operatorname{im}\,}}
\newcommand{\rk}{{{\operatorname{rk}}}}
\newcommand{\ul}{\underline}
\newcommand{\id}{{\operatorname{id}}}
\newcommand{\Quot}{{\operatorname{Quot\,}}}
\newcommand{\comment}[1]{}
\newcommand{\lgl}{\langle}
\newcommand{\rgl}{\rangle}
\newcommand{\qchoose}[2]{\left[{#1\atop#2}\right]}
\begin{document}

\pagenumbering{arabic}
\title[]{Periodicity of irreducible modular and quantum characters} \author[]{Peter Fiebig}
\begin{abstract} For a root system $R$, a field $\SK$ and an invertible element $q$ in $\SK$ let  $U_{(\SK,q)}(R)$ be the associated quantum group, defined via Lusztig's divided powers construction. We study the irreducible characters of this algebra with integral (but not necessarily dominant) highest weight. If $\sigma_l(q)=0$, where $\sigma_l$ is the  $l$-th cyclotomic polynomial, then these characters exhibit a certain $l$-periodicity. 
 \end{abstract}

\address{Department Mathematik, FAU Erlangen--N\"urnberg, Cauerstra\ss e 11, 91058 Erlangen}
\email{fiebig@math.fau.de}
\maketitle

\section{Introduction}
Let $R$ be a root system, $\SK$ a field and $q\in\SK$ an invertible element. We denote by $U_\SK=U_{(\SK,q)}(R)$ the corresponding quantum group that is obtained via base change from Lusztig's integral form $U_{\DZ[v,v^{-1}]}$.  This is a triangularized algebra, i.e. we have a decomposition  $U_\SK=U_\SK^-\otimes U_\SK^0\otimes U_\SK^+$. Let $X$ be the weight lattice of $R$. Any  $\lambda\in X$ gives rise to a character $ U_\SK^0\to\SK$ (of ``type 1''), and the standard construction (via Verma modules) yields  the irreducible $U_\SK$-module  $L_\SK(\lambda)$ of highest weight $\lambda$. It has a weight decomposition  $L_\SK(\lambda)=\bigoplus_{\mu\in X} L_\SK(\lambda)_\mu$. For $l>0$ denote by $\sigma_l\in\DZ[v]$ the $l$-th cyclotomic polynomial.
The main result of this article is the following.

\begin{theorem1} Let $\lambda,\mu\in X$ and assume that $\mu\le\lambda$. Suppose that $l\ge 1$ is strictly bigger than any coefficient in the expansion of $\lambda-\mu$ as a sum of simple roots, and that  
 $\sigma_l(q)=0$ in $\CK$. 
Then
$$
\dim L_\SK(\lambda)_\mu=\dim L_\SK(\lambda+l\gamma)_{\mu+l\gamma}
$$
for all $\gamma\in X$. 
\end{theorem1}

Suppose that  $q=1$.  If $\ch \SK=p>0$, then  $\sigma_{l}(1)=0$ in $\SK$ if and only if $l=p^r$ for some  $r\ge0$. So in this case the above result specializes to the following.  

\begin{theorem2} Suppose that $\ch \SK=p>0$ and $q=1$. Let $\lambda,\mu\in X$ and suppose that $\mu\le\lambda$. Suppose that $r\ge0$ is such that $p^r$ is strictly bigger than any coefficient in an expansion of $\lambda-\mu$ in terms of simple roots. Then
$$
\dim L_\SK(\lambda)_\mu=\dim L_\SK(\lambda+p^r\gamma)_{\mu+p^r\gamma}
$$
for all $\gamma\in X$.
\end{theorem2}
Note that in this case the periodicity has a certain {\em fractal} nature.  Note also that for $r=1$ one can  find the statement of Theorem 2  somewhat hidden in the proof of the theorem in  the appendix of \cite{JanMhG}. Jantzen's  arguments  resemble the arguments that we present in this article in some respects.

For $q=1$ and arbitrary $\SK$ there is a surjective homomorphism $U_\SK\to D_\SK$, where $D_\SK$ is the hyperalgebra of the split semisimple, simply connected algebraic group $G_\SK$ with root system $R$. For a dominant weight $\lambda$ we obtain $L_\SK(\lambda)$ by restriction  from the irreducible rational representation of $G_\SK$ with highest weight $\lambda$. 
For dominant weights  $\lambda$ and $\lambda+p^r\gamma$ the statement of Theorem 2 is implied by Steinberg's tensor product theorem for irreducible representations of $G_\SK$. 

Given arbitrary $\lambda,\mu$ we can find $r\gg 0$ that satisfies the assumption in Theorem 2, and then some dominant weight $\gamma$ such that $\lambda+p^r\gamma$ is dominant. So  the characters of all $L_\SK(\lambda)$ can be deduced from the set of  dominant characters. Steinberg's tensor product theorem shows that the dominant characters can be deduced from the (finite) set of restricted dominant characters.  So even in this more general setup, the restricted irreducible characters contain sufficient information to determine all irreducible characters.

The main idea of the proof of Theorem 1 is  to realize $L_\SK(\lambda)$ as the quotient of an auxiliary object $P_\SK(\lambda)$, which is an $X$-graded vector space freely generated by the set of {\em simple root paths} starting at $\lambda$. The quotient is taken by the radical of a bilinear form $b_\lambda$.  Note that a pair $(\SK,q)$ as above as a $\SZ=\DZ[v,v^{-1}]$-algebra $\SK$ that is a field.  The objects $P_\SK(\lambda)$ and  $b_\lambda$ can be obtained from integral versions over $\SZ$ by base change. But the radical, of course,  depends heavily on $\SK$ and $q$.  We study the $\SZ$-matrix of $b_\lambda$ in the basis of simple root paths. Each entry is given by evaluating a {\em quantum polynomial function} $X\to\DZ[v,v^{-1}]$ at $\lambda$. These functions are polynomials in the quantized linear functions $[f(\cdot)]\colon X\to \SZ$, but with coefficients  in the quotient field $\SQ:=\DQ(v)$. It is not  difficult to control the denominators, and a simple arithmetic statement about the periodicity of quantum polynomial functions then yields the periodicity of Theorem 1.

{\bf Acknowledgement:} This material is based upon work supported by a grant of the Institute for Advanced Study School of Mathematics.

\section{The quantum group $U_\SK$}
Let $R$ be a root system and $\Pi\subset R$ a basis. For any $\alpha\in R$ we denote by $\alpha^\vee$ its coroot. Set $a_{\alpha,\beta}=\lgl\alpha,\beta^\vee\rgl$ for $\alpha,\beta\in\Pi$. Then there is a vector $d=(d_\alpha)$ with entries $d_\alpha\in\{1,2,3\}$ such that $(d_\alpha a_{\alpha,\beta})$ is a symmetric matrix and each irreducible component of $R$ contains an $\alpha$ with $d_\alpha=1$. 
The goal of this section is to define the quantum group $U_{(\SK,q)}$ and its category $\CO_{(\SK,q)}$ of highest weight representations for any field $\SK$ and any non-zero element $q$ in $\SK$. It has a triangular decomposition and a category $\CO$ of representations. 

\subsection{The (integral) quantum group}
Set
$\SZ:=\DZ[v,v^{-1}]$.  For $n\in\DZ$   define
$$
[n]:=\frac{v^{n}-v^{-n}}{v-v^{-1}}=
\begin{cases}
0,&\text{ if $n=0$,}\\
v^{n-1}+v^{n-3}+\dots+v^{-n+1},&\text{ if $n> 0$,}\\
-v^{-n-1}-v^{-n-3}-\dots-v^{n+1},&\text{ if $n<0$}.
\end{cases}
$$
For $n\ge 0$ set $[n]^!=[1]\cdot[2]\cdots[n]$, and for $n\in\DZ$ and $r\ge 0$ set
$$
\qchoose{n}{r}:=\frac{[n]\cdot[n-1]\cdots[n-r+1]}{[1]\cdot[2]\cdots[r]}.
$$
For $\alpha\in\Pi$ set  $v_\alpha:=v^{d_\alpha}$ and 
$$
[n]_\alpha:=[n](v_\alpha)=\frac{v_\alpha^n-v_\alpha^{-n}}{v_\alpha-v_{\alpha}^{-1}}.
$$
Define $[n]_\alpha^!$, $\qchoose{n}{r}_\alpha\in\SZ$ likewise. All of the above are elements in $\SZ$.

Set  $\SQ:=\DQ(v)=\Quot(\SZ)$.  The {\em quantum group $U_\SQ=U_\SQ(R)$} associated with $R$ is the associative, unital $\SQ$-algebra with set of generators $\{e_\alpha, f_\alpha, k_\alpha, k_\alpha^{-1}\mid\alpha\in\Pi\}$ subject to the following relations (cf. \cite[Section 1.1]{Lus90}):

\begin{align*}
k_\alpha k_\beta&=k_\beta k_\alpha,  & k_\alpha k_\alpha^{-1}&=1=k_\alpha^{-1}k_\alpha,\\
k_\alpha e_\beta&=v_\alpha^{a_{\alpha\beta}} e_\beta k_\alpha, & k_\alpha f_\beta&=v_\alpha^{a_{\alpha\beta}} f_\beta k_\alpha,\\
[e_\alpha,f_\beta]&=0\quad\text{ ($\alpha\ne\beta$)},& [e_\alpha,f_\alpha]&=\frac{k_\alpha-k_{\alpha}^{-1}}{v_\alpha -v_\alpha^{-1}}\\
\end{align*}
and the {\em Serre-relations}
\begin{align*}0&=\sum_{r+s=1-a_{\alpha\beta}}(-1)^s\qchoose{1-a_{\alpha\beta}}{s}_{\alpha}e^r_\alpha e_\beta e_\alpha^s\quad\text{ for $\alpha\ne\beta$},\\
0&=\sum_{r+s=1-a_{\alpha\beta}}(-1)^s\qchoose{1-a_{\alpha\beta}}{s}_{\alpha}f^r_\alpha f_\beta f_\alpha^s\quad\text{ for $\alpha\ne\beta$}.
\end{align*}
We endow $U_\SQ$ with an  $X$-grading such that $\deg(e_\alpha)=\alpha$, $\deg(f_\alpha)=-\alpha$ and $\deg(k_{\alpha})=\deg(k_\alpha^{-1})=0$. 

For $\alpha\in\Pi$ and $n\ge 0$ define
$$
e_{\alpha}^{[n]}:=e_\alpha^n/[n]_{\alpha}^!, \quad f_{\alpha}^{[n]}:=f_\alpha^n/[n]_{\alpha}^!,\quad
\qchoose{k_\alpha}{n}_\alpha:=\prod_{s=1}^n\frac{k_\alpha v_\alpha^{-s+1}-k_\alpha^{-1}v_\alpha^{s-1}}{v_\alpha^{s}-v_\alpha^{-s}}.
$$
 Following ideas of Kostant, Lusztig defined the integral version $U_\SZ$  of $U_\SQ$ as the $\SZ$-subalgebra generated by the set $\{e_{\alpha}^{[n]},f_\alpha^{[n]}\mid \alpha\in \Pi, n\ge 0\}\cup\{k_\alpha,k_\alpha^{-1}\mid\alpha\in\Pi\}$. Then $U_\SZ^+$, $U_\SZ^-$ and $U_\SZ^0$ are defined as the $\SZ$-subalgebras in $U_\SZ$ generated by $e_{\alpha}^{[n]}$'s, the $f_\alpha^{[n]}$'s, and the set $\{k_\alpha,k_\alpha^{-1},\qchoose{k_\alpha}{n}_\alpha\mid \alpha\in\Pi, n\ge 0\}$, resp. (
cf. \cite[Section 1.3 \& 1.4]{Lus90}). All the algebras above inherit an $X$-grading from $U_\SQ$.

 In  \cite[Section 6.5]{Lus90} the elements $e_\alpha^{[n]}$ and $f_{\alpha}^{[n]}$ are defined for each positive root $\alpha\in R^+$ and $n\ge0$. Then we have the following PBW-type theorem. 
 
\begin{theorem}[{\cite[Theorem 6.7]{Lus90}}] \label{thm-PBW}
\begin{enumerate}
\item The algebra $U_\SZ^+$ is free as  a $\SZ$-module, and the elements $\prod_{\alpha\in R^+}e_\alpha^{[n_\alpha]}$ with $n_\alpha\in \DZ_{\ge 0}$ form a basis.
\item The algebra $U_\SZ^-$ is free as  a $\SZ$-module, and the elements $\prod_{\alpha\in R^+}f_\alpha^{[n_\alpha]}$ with $n_\alpha\in \DZ_{\ge 0}$ form a basis.
\item The algebra $U_\SZ^0$ is free as a  $\SZ$-module, and the elements $\prod_{\alpha\in \Pi}k_{\alpha}^{\delta_\alpha}\qchoose{k_\alpha}{r_\alpha}_\alpha$ with $\delta_\alpha\in\{0,1\}$ and $r_\alpha\in \DZ_{\ge 0}$ form a basis.
\item The multiplication defines an isomorphism 
$$
U_\SZ^-\otimes U_\SZ^0\otimes U_\SZ^+\xrightarrow{\sim} U_\SZ
$$
of $\SZ$-modules. 
\end{enumerate}
\end{theorem}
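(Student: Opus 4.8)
The final statement is Lusztig's PBW theorem for the integral form, so I will only sketch the strategy. The plan is to reduce everything to part (1) about the positive part $U_\SZ^+$: part (2) follows from (1) by the symmetry $e_\alpha\leftrightarrow f_\alpha$, part (3) is an independent (and easier) interpolation argument on the Cartan part, and part (4) follows from (1)--(3) by base change to $\SQ$. So the bulk of the work is in (1), which I would do in two stages: first over the fraction field $\SQ$, then descend to $\SZ$.

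\emph{Stage A (over $\SQ$).} Use Lusztig's braid group action. Fix a reduced expression $w_0=s_{i_1}\cdots s_{i_N}$ of the longest element of the Weyl group and set $\beta_k=s_{i_1}\cdots s_{i_{k-1}}(\alpha_{i_k})$, so that $\beta_1,\dots,\beta_N$ enumerate $R^+$. Define root vectors $e_{\beta_k}=T_{i_1}\cdots T_{i_{k-1}}(e_{i_k})\in U_\SQ^+$ via the algebra automorphisms $T_i$ of $U_\SQ$. The Levendorski\u{\i}--So\u{\i}belman straightening relations say that for $j<k$ the element $e_{\beta_k}e_{\beta_j}-v^{c}e_{\beta_j}e_{\beta_k}$ lies in the span of ordered monomials in the intermediate vectors $e_{\beta_{j+1}},\dots,e_{\beta_{k-1}}$; hence the ordered monomials $\prod_k e_{\beta_k}^{n_k}$ span $U_\SQ^+$. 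Linear independence follows from the non-degeneracy of the bilinear form on $U_\SQ^+$ (or from passing to an associated graded quantum polynomial ring). This gives the $\SQ$-basis.

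\emph{Stage B (descent to $\SZ$).} This is the technical heart. One must show: (i) for every $\alpha\in R^+$ and $n\ge0$ the divided power $e_\alpha^{[n]}$ lies in the $\SZ$-subalgebra $U_\SZ^+$ generated by the \emph{simple} divided powers; (ii) the automorphisms $T_i$ restrict to automorphisms of $U_\SZ$; and (iii) the straightening relations of Stage A, rewritten in terms of divided powers, have coefficients in $\SZ$ rather than merely $\SQ$. Point (ii) is established by exhibiting explicit integral formulas for $T_i(e_j^{[n]})$, $T_i(f_j^{[n]})$ and $T_i(k_\mu)$ (the case-by-case formulas for $a_{ij}=0,-1,-2,-3$), and (iii) reduces, via the higher-order Serre relations and the commutation identities, to $q$-binomial identities with values in $\SZ$. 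Granting (i)--(iii), the ordered monomials $\prod_k e_{\beta_k}^{[n_k]}$ span $U_\SZ^+$ over $\SZ$; since they differ from members of the Stage~A basis only by the invertible scalars $[n_k]_{\beta_k}^!$, they are $\SZ$-linearly independent, hence an $\SZ$-basis, proving (1).

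\emph{The remaining parts.} For (3), $U_\SZ^0$ is commutative and acts diagonally on weight spaces, with $k_\alpha$ acting on the $\lambda$-weight space by $v_\alpha^{\lgl\lambda,\alpha^\vee\rgl}$ and $\qchoose{k_\alpha}{n}_\alpha$ by the Gaussian binomial $\qchoose{\lgl\lambda,\alpha^\vee\rgl}{n}_\alpha$; using the $q$-Vandermonde identity one checks that the $\SZ$-span of the proposed monomials is a subalgebra containing all generators, and a Gaussian Vandermonde non-vanishing argument gives their linear independence as functions on $X$. For (4), surjectivity of $m\colon U_\SZ^-\otimes_\SZ U_\SZ^0\otimes_\SZ U_\SZ^+\to U_\SZ$ follows by rewriting monomials in ordered form using the integral expansions of $[e_\alpha^{[m]},f_\alpha^{[n]}]$, and injectivity follows because the source is a free $\SZ$-module, $U_\SZ\hookrightarrow U_\SQ$ is torsion-free, and $m\otimes_\SZ\SQ$ is the known $\SQ$-triangular decomposition. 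The main obstacle is Stage B: the integrality of the braid group action on divided powers and of the straightening coefficients; everything else is formal once Stages A and B are in hand.
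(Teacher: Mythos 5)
The paper does not prove this theorem at all: it is quoted as \cite[Theorem 6.7]{Lus90} and used as a black box, so there is no internal argument to compare against. That said, your sketch is an accurate reconstruction of Lusztig's own strategy. Stage A (braid-group action $T_i$, Levendorski\u{\i}--So\u{\i}belman straightening, linear independence via a nondegenerate pairing on $U_\SQ^+$) and Stage B (integrality of the braid operators on divided powers, the explicit case-by-case formulas for $T_i(e_j^{[n]})$, and integrality of the rewritten straightening coefficients) is exactly where the content of \cite[Sections 4--6]{Lus90} lies, and you have correctly singled out Stage B as the technical heart. Your treatment of $U_\SZ^0$ via the $q$-Vandermonde multiplication rule for $\qchoose{k_\alpha}{m}_\alpha\qchoose{k_\alpha}{n}_\alpha$ and separation of points by evaluation on weights, and the reduction of (4) to (1)--(3) by freeness plus base change to $\SQ$, match the standard argument. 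Two small remarks: in Stage A you should make explicit that the ordered monomials are taken with respect to the convex order on $R^+$ induced by the chosen reduced word (the paper itself flags this after the theorem statement), since the straightening relations depend on it; and for part (3) the identity you need is slightly more than plain $q$-Vandermonde — one also has to absorb a possible extra factor of $k_\alpha$ into the $\delta_\alpha\in\{0,1\}$ bookkeeping, which is the reason those $k_\alpha^{\delta_\alpha}$ are there at all. Neither affects the correctness of your outline.
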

Note that the products in parts (1) and (2) in the theorem above have to be taken with respect to  certain orders on the set of positive roots, cf. Theorem \cite[Theorem 6.7]{Lus90}.

Fix a field $\SK$ and an invertible element  $q$ in $\SK$. Then $\SK$ acquires an $\SZ$-algebra structure such that $v\in\SZ$ acts as multiplication with $q$. We set 
$$
U_\SK=U_{(\SK,q)}:=U_\SZ\otimes_\SZ\SK
$$ 
and define  $U_\SK^+$, $U_\SK^-$ and $U_\SK^0$ likewise. Theorem \ref{thm-PBW} implies that these are subalgebras in $U_\SK$ and that  the multiplication map $U_\SK^-\otimes U_\SK^0\otimes U_\SK^+\to U_\SK$ is an isomorphism. 

\subsection{Representation theory of $U_\SK$} We denote by $X$ the weight lattice associated with $R$.
By \cite[Lemma 1.1]{APW},  every $\mu\in X$ yields a character (denoted by the same letter)
\begin{align*}
\mu\colon U^0_\SZ&\to\SZ\\
k_\alpha^{\pm1}&\mapsto v_\alpha^{\pm \lgl\mu,\alpha^\vee\rgl}\\
\qchoose{k_\alpha}{r}_\alpha&\mapsto \qchoose{\lgl\mu,\alpha^\vee\rgl}{r}_{\alpha}\text{ ($\alpha\in\Pi$, $r\ge0$)}.
\end{align*}
A $U_\SK$-module $M$ is called a {\em weight module} if $M=\bigoplus_{\mu\in X}M_\mu$, where
$$
M_\mu:=\{m\in M\mid H.m=\mu(H)m\text{ for all $H\in U_\SK^0$}\}.
$$
By definition, all our weight modules are ``of type 1'' (cf. \cite[Section 5.1]{JanQG}).
 
\begin{remark}\label{rem-dequant} Suppose that $\CK$ is a field and $q=1$.
There is a surjective homomorphism $U_\SK\to D(G_\SK)$, where $G_\SK$ is the split semisimple, simply connected and connected algebraic group with root system $R$ over  $\SK$, and $D(G_\SK)$ is the algebra of distributions of $G_\SK$. The homomorphism maps $e_\alpha$ and $f_\alpha$ to the Serre generators of $D(G_\SK)$,  $\qchoose{k_\alpha}{r}_\alpha$ to ${h_\alpha}\choose{r}$ for all $\alpha\in\Pi$, while $k_\alpha$ is mapped to $1$ (cf. Theorem \ref{thm-PBW}, \cite[Section 8.15]{Lus90}). By \cite[Section II, 1.20]{JanAG}, a finite dimensional representation of $U_\SK$ is the same as a rational representation of $G_\SK$-module. 
\end{remark}

The following (well-known) relation is a cornerstone of the approach towards representations of quantum groups chosen in this article. 
 \begin{lemma} \label{lemma-comrelsU} Let $M$ be a $U_\SK$-module. For all $\alpha,\beta\in\Pi$ and $v\in M_\mu$ we then have
 $$
 e_\alpha^{[m]}f_\beta^{[n]}(v)=
 \begin{cases}
 f_\beta^{[n]} e_\alpha^{[m]}(v),&\text{ if $\alpha\ne\beta$},\\
 \sum_{r=0}^{\min(m,n)}\qchoose{\lgl\mu,\alpha^\vee\rgl+m-n}{r}_\alpha f_\beta^{[n-r]}e_\alpha^{[m-r]}(v),&\text{ if $\alpha=\beta$}.
 \end{cases}
 $$
 \end{lemma}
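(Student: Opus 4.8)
The plan is to prove Lemma \ref{lemma-comrelsU} purely algebraically, by verifying the identity inside $U_\SK$ itself (or rather inside $U_\SZ$, then base-changing) rather than on modules. Since the statement is about the action on a weight vector $v\in M_\mu$, and every $H\in U_\SK^0$ acts on $v$ via the character $\mu$, it suffices to show that in $U_\SK$ one has
$$
e_\alpha^{[m]}f_\beta^{[n]} = f_\beta^{[n]}e_\alpha^{[m]}
$$
when $\alpha\ne\beta$, and that when $\alpha=\beta$ the element $e_\alpha^{[m]}f_\alpha^{[n]}$ equals $\sum_{r=0}^{\min(m,n)} f_\alpha^{[n-r]}\, c_r \,e_\alpha^{[m-r]}$ where $c_r\in U_\SK^0$ is an element acting on $M_\mu$ as the scalar $\qchoose{\lgl\mu,\alpha^\vee\rgl+m-n}{r}_\alpha$. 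The natural candidate for $c_r$ is $\qchoose{k_\alpha;\, m-n}{r}_\alpha$, a shifted version of the divided-power $k$-binomial, and one checks directly from the definition that the character $\mu$ sends $\qchoose{k_\alpha;\,m-n}{r}_\alpha$ to $\qchoose{\lgl\mu,\alpha^\vee\rgl+m-n}{r}_\alpha$ (the shift $m-n$ in the $U^0$-element produces exactly the shift in the evaluated binomial coefficient).

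The $\alpha\ne\beta$ case is the easy one: it follows immediately by induction on $m$ and $n$ from the relation $[e_\alpha,f_\beta]=0$, since commuting powers of $e_\alpha$ past powers of $f_\beta$ introduces no correction terms, and dividing by the scalars $[m]_\alpha^!$, $[n]_\beta^!$ changes nothing. For the $\alpha=\beta$ case I would reduce to the rank-one situation: the subalgebra of $U_\SQ$ generated by $e_\alpha, f_\alpha, k_\alpha^{\pm 1}$ is (a quotient of) the quantum group $U_{v_\alpha}(\fsl_2)$, and the identity in question is the standard $\fsl_2$-commutator formula for divided powers, which can be found in Lusztig's book or in \cite{JanQG}. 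Concretely, one proves in $U_{v_\alpha}(\fsl_2)$ the relation $e^{[m]}f^{[n]}=\sum_{r} f^{[n-r]}\qchoose{k;\,m-n}{r}e^{[m-r]}$ by induction on $m+n$, using the base case $[e,f]=(k-k^{-1})/(v_\alpha-v_\alpha^{-1})$ together with the easily verified identities relating $\qchoose{k;t}{r}$ for consecutive values of $t$ and the multiplication formulas $e\cdot e^{[m-1]}=[m]_\alpha e^{[m]}$, $f\cdot f^{[n-1]}=[n]_\alpha f^{[n]}$. All of this happens over $\SZ$ (the coefficients are genuine elements of $\SZ$ by Theorem \ref{thm-PBW}(3)), so it remains valid after $\otimes_\SZ\SK$.

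The main obstacle, such as it is, is bookkeeping rather than conceptual: getting the shift parameter and the range of summation exactly right, and being careful that the $q$-binomial identities one invokes (the $q$-Pascal rules and the behaviour of $\qchoose{k;t}{r}$ under shifting $t\mapsto t\pm1$) are stated for the correct specialization $v\mapsto v_\alpha$. Once the rank-one identity is in hand, applying the character $\mu$ to the middle factor $\qchoose{k_\alpha;\,m-n}{r}_\alpha$ and using that $k_\alpha$ acts on $M_\mu$ as $v_\alpha^{\lgl\mu,\alpha^\vee\rgl}$ finishes the proof; the only place one needs $M$ to be a weight module is precisely to make sense of this evaluation. Alternatively, and perhaps more cleanly for the exposition, one can simply cite the formula as it appears in \cite[Section 5 or 6]{JanQG} or \cite{Lus90} for $U_v(\fsl_2)$ and deduce the general case by restricting to the rank-one subalgebra generated by $\alpha$.
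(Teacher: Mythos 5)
Your overall strategy is essentially the same as the paper's: reduce to the rank-one (quantum $\fsl_2$) situation, invoke the known divided-power commutation formula in $U_\SZ$ (from Lusztig or Jantzen), and then evaluate the middle $U^0$-factor on a weight vector. The $\alpha\ne\beta$ case is handled the same way in both.

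However, there is a genuine error in the $\alpha=\beta$ case, and it is not merely a bookkeeping slip: the shift parameter you propose is wrong, and it is wrong precisely because of a conceptual misstep you make explicit. You write that the middle element $c_r$ should be ``an element acting on $M_\mu$ as the scalar $\qchoose{\lgl\mu,\alpha^\vee\rgl+m-n}{r}_\alpha$'' and then choose $c_r=\qchoose{k_\alpha;m-n}{r}_\alpha$ because the character $\mu$ sends it to that scalar. But $c_r$ does \emph{not} act on $M_\mu$: it is sandwiched between $f_\alpha^{[n-r]}$ and $e_\alpha^{[m-r]}$, so it is applied to $e_\alpha^{[m-r]}(v)$, a vector of weight $\mu+(m-r)\alpha$. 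The correct element (this is the form in Lusztig, and the one the paper cites) is $\qchoose{k_\alpha;2r-m-n}{r}_\alpha$; applied to a vector of weight $\mu+(m-r)\alpha$ it yields
$\qchoose{\lgl\mu,\alpha^\vee\rgl+2(m-r)+(2r-m-n)}{r}_\alpha=\qchoose{\lgl\mu,\alpha^\vee\rgl+m-n}{r}_\alpha$,
which is what you want. With your choice $\qchoose{k_\alpha;m-n}{r}_\alpha$ in that position one instead gets $\qchoose{\lgl\mu,\alpha^\vee\rgl+3m-n-2r}{r}_\alpha$, which agrees with the target only when $r=m$. Correspondingly, the $U_v(\fsl_2)$ identity you state, $e^{[m]}f^{[n]}=\sum_r f^{[n-r]}\qchoose{k;m-n}{r}e^{[m-r]}$, is false; the correct one has $\qchoose{k;2r-m-n}{r}$. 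The remedy is exactly the small weight-tracking computation the paper carries out: after noting that the middle factor acts on weight $\mu+(m-r)\alpha$, one sets $\zeta=\lgl\mu+(m-r)\alpha,\alpha^\vee\rgl+2r-m-n=\lgl\mu,\alpha^\vee\rgl+m-n$ and observes that the resulting scalar is $\qchoose{\zeta}{r}_\alpha$.
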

 \begin{proof} The case $\alpha\ne\beta$ follows directly from the fact that $e_\alpha$ and $f_\beta$ commute. By \cite[Section 6.5]{Lus90} we have the following relation in the algebra $U_\SZ$:
 $$
 e_{\alpha}^{[m]}f_\beta^{[n]}=\sum_{r=0}^{\min(m,n)} f_\alpha^{[n-r]}\qchoose{k_\alpha;2r-m-n}{r}_\alpha e_\alpha^{[m-r]},
 $$
 where
$$
\qchoose{k_\alpha;c}{r}_\alpha =\prod_{s=1}^r\frac{k_\alpha v_\alpha^{c-(s-1)}-k_{\alpha}^{-1}v_\alpha^{-(c-(s-1))}}{v_\alpha^{s}-v_\alpha^{-s}}.
$$
This is an element in $U_\SK^0$  that acts as multiplication with 
\begin{align*}
\prod_{s=1}^r\frac{v_\alpha^{\lgl\nu,\alpha^\vee\rgl+c-(s-1)}-v_\alpha^{-(\lgl\nu,\alpha^\vee\rgl+c-(s-1))}}{v_\alpha^{s}-v_\alpha^{-s}}.
\end{align*}
on a vector of weight $\nu$.
For  $v\in M_\mu$ we obtain
\begin{align*}
 e_{\alpha}^{[m]}f_\beta^{[n]}(v)&=\sum_{r=0}^{\min(m,n)} f_\alpha^{[n-r]}\prod_{s=1}^r\frac{v_\alpha^{\zeta-(s-1)}-v_\alpha^{-(\zeta-(s-1))}}{v_\alpha^{s}-v_\alpha^{-s}}e_\alpha^{[m-r]}(v),
 \end{align*}
 where $\zeta=\lgl\mu+(m-r)\alpha,\alpha^\vee\rgl+2r-m-n=\lgl\mu,\alpha^\vee\rgl+m-n$. It remains to show that
 $$
 \qchoose{\zeta}{r}_\alpha=\prod_{s=1}^r\frac{v_\alpha^{\zeta-(s-1)}-v_\alpha^{-(\zeta-(s-1))}}{v_\alpha^{s}-v_\alpha^{-s}},
 $$
 which is (almost) immediate from the definition. 
\end{proof}
 \subsection{Category $\CO$}
 Given the triangular decomposition of $U_\SK$, the following is a standard definition.
 \begin{definition}
  We denote by $\CO_\SK=\CO_{(\SK,q)}$ the full subcategory of the category of  $U_\SK$-modules that contains all  $M$ that are weight modules and have the property that  for any $m\in M$  the $\SK$-vector space $U_\SK^{+}.m$ is finite dimensional. 
  \end{definition}
  Clearly,  $\CO_\SK$  is an abelian category. Note that in the case $\ch \SK=0$ the above category is studied in \cite{AM}.

   Set $U_\SK^{\ge 0}:=U_\SK^0 U_\SK^+\subset U_\SK$. 
 For $\lambda\in X$ we define the {\em Verma module} with highest weight $\lambda$ by
 $$
 \Delta_\SK(\lambda):=U_\SK\otimes_{U_\SK^{\ge 0}}\SK_\lambda,
 $$
 where $\SK_\lambda$ is the one-dimensional $\SK$-vector space, endowed with the $U_\SK^{\ge 0}$-action such that $U_\SK^0$ acts via the character $\lambda$, and $e_{\alpha}^{[n]}\in U_\SK^{+}$ acts trivially for $n>0$ (this is possible by Theorem \ref{thm-PBW}).  Standard arguments show that $\Delta_\SK(\lambda)$ is an object in $\CO_\SK$. Standard arguments also show that the following holds (cf. \cite[Section 5.5]{JanQG}).   
 \begin{lemma} For all $\lambda\in X$, the Verma module $\Delta_\SK(\lambda)$ has a unique irreducible quotient $L_\SK(\lambda)$ in $\CO_\SK$. The $L_\SK(\lambda)$ with $\lambda$ in $X$ form a full set of representatives of the irreducible objects in $\CO_\SK$.
 \end{lemma}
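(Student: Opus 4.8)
The plan is to run the usual highest weight argument; the only genuinely module‑theoretic input is the PBW‑type Theorem~\ref{thm-PBW}, which gives $\Delta_\SK(\lambda)\cong U_\SK^-\otimes_\SK\SK_\lambda$ as a graded $U_\SK^-$‑module. Writing $v_\lambda$ for the image of $1\otimes 1$, this says that $\Delta_\SK(\lambda)$ is generated by $v_\lambda$, that it decomposes as a weight module $\Delta_\SK(\lambda)=\bigoplus_{\mu\le\lambda}\Delta_\SK(\lambda)_\mu$ with each weight space finite dimensional (there are only finitely many ways to write $\lambda-\mu$ as a sum of positive roots), and that the top weight space $\Delta_\SK(\lambda)_\lambda=\SK v_\lambda$ is one dimensional.

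First I would produce the unique irreducible quotient. If $N\subsetneq\Delta_\SK(\lambda)$ is a proper submodule, then $N$ is again a weight module (a submodule of a direct sum of pairwise distinct $U^0_\SK$‑character spaces is the direct sum of its intersections with them) and $v_\lambda\notin N$, so $N_\lambda=0$ and hence $N\subseteq\bigoplus_{\mu<\lambda}\Delta_\SK(\lambda)_\mu$. Therefore the sum $N(\lambda)$ of all proper submodules is itself contained in $\bigoplus_{\mu<\lambda}\Delta_\SK(\lambda)_\mu$, so it is the unique maximal proper submodule, and $L_\SK(\lambda):=\Delta_\SK(\lambda)/N(\lambda)$ is the unique irreducible quotient. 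It is nonzero, since its $\lambda$‑weight space is still $\SK\bar v_\lambda$, and it lies in $\CO_\SK$ because $\CO_\SK$ is closed under quotients: a quotient of a weight module is a weight module, and $U_\SK^+.\bar m$ is a quotient of $U_\SK^+.m$, hence finite dimensional.

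Next I would show that every irreducible object $L$ of $\CO_\SK$ is isomorphic to some $L_\SK(\lambda)$. Pick a nonzero weight vector $m\in L_\nu$. Since $U_\SK^+$ is graded with $\deg e_\alpha^{[n]}=n\alpha$, the subspace $U_\SK^+.m$ is $X$‑graded with weights in $\nu+\DN\Pi$, and it is finite dimensional by the definition of $\CO_\SK$; choose a weight $\lambda$ occurring in it of maximal height. Then any $0\ne v\in (U_\SK^+.m)_\lambda$ satisfies $e_\alpha^{[n]}v\in(U_\SK^+.m)_{\lambda+n\alpha}=0$ for all $\alpha\in\Pi$ and $n>0$, while $U_\SK^0$ acts on it through the character $\lambda$. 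By the universal property of the induced module $\Delta_\SK(\lambda)=U_\SK\otimes_{U_\SK^{\ge 0}}\SK_\lambda$ there is a $U_\SK$‑linear map $\varphi\colon\Delta_\SK(\lambda)\to L$ with $\varphi(v_\lambda)=v$; as $L$ is irreducible and $\varphi\ne 0$, it is surjective, and $\ker\varphi$ is a proper submodule with irreducible quotient, hence $\ker\varphi=N(\lambda)$ and $L\cong L_\SK(\lambda)$. Finally $\lambda$ is recovered from $L_\SK(\lambda)$ as its unique maximal weight (the $\lambda$‑weight space is one dimensional and all other weights are $<\lambda$), so $L_\SK(\lambda)\cong L_\SK(\mu)$ forces $\lambda=\mu$.

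The step that is not purely formal, and which I expect to be the main obstacle, is the extraction of a highest weight vector from an \emph{arbitrary} irreducible object: a weight module need not possess a maximal weight, and it is precisely the finiteness axiom ``$U_\SK^+.m$ is finite dimensional'' in the definition of $\CO_\SK$ that rescues the argument, by allowing one to maximise the height function on $\DN\Pi$ over the finitely many weights of $U_\SK^+.m$. The remaining ingredients — that weight modules form a subcategory closed under subquotients, and that $\Delta_\SK(\lambda)_\lambda$ is one dimensional — are routine consequences of Theorem~\ref{thm-PBW} together with the fact that the characters $\mu\colon U^0_\SZ\to\SK$ attached to distinct $\mu\in X$ are distinct, and for these I would simply refer to \cite[Section 5.5]{JanQG}.
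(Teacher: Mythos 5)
Your proof is correct and is precisely the standard highest-weight argument that the paper itself alludes to without writing it out, deferring instead to \cite[Section 5.5]{JanQG}. You also correctly isolate the one non-formal ingredient — that the finiteness axiom on $U_\SK^+.m$ is what guarantees a maximal weight exists — which is indeed exactly where the definition of $\CO_\SK$ enters the argument.
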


\section{A model for the semisimple objects in $\CO_\SK$}
In this section we want to study a category that resembles the category $\CO_\SK$, but ``ignores the Serre-relations''. We will see later that it is equivalent to the full subcategory of $\CO_\SK$ that contains all semisimple objects. 

We consider $X$-graded $\SK$-vector spaces $M=\bigoplus_{\mu\in X}M_\mu$ endowed with $\SK$-linear operators 
$E_{\alpha,n},F_{\alpha,n}\colon M\to M$ for all $\alpha\in\Pi$ and $n>0$ that are homogeneous of degree $+n\alpha$ and $-n\alpha$, resp. We will always set $E_{\alpha,0}=F_{\alpha,0}=\id_{M_\mu}$.
 There are  three conditions that we will assume on the above data. The first two are easy to formulate.
 
\begin{enumerate}
\item[(C1)] There exists some $\gamma\in X$ such that $M_\mu\ne 0$ implies $\mu\le\gamma$.
\item[(C2)] For all $\mu\in X$, $\alpha,\beta\in\Pi$, $m,n>0$, and $v\in M_{\mu}$,
$$
E_{\alpha,m}F_{\beta,n}(v)=\begin{cases}
F_{\beta,n}E_{\alpha,m}(v),&\text{if $\alpha\ne\beta$,}\\
\sum_{r= 0}^{\min(m,n)} \qchoose{\lgl\mu,\alpha^\vee\rgl+m-n}{r}_{\alpha}F_{\alpha,n-r}E_{\alpha,m-r}(v),&\text{if $\alpha=\beta$}
\end{cases}
$$
(cf. Lemma \ref{lemma-comrelsU}).
\end{enumerate}
(cf. Lemma \ref{lemma-comrelsU}.) A remark on property (C1): Note that for any finite subset $T$ of $X$ there exists some $\gamma\in X$ such that $\mu\le\gamma$ for all $\mu\in T$.

In order to formulate the third condition, we need some definitions. For any $\mu\in X$ define
$$
M_{\delta\mu}:=\bigoplus_{\alpha\in\Pi, n>0} M_{\mu+n\alpha}
$$
and  let 
\begin{align*}
E_\mu&\colon M_\mu\to M_{\delta\mu},\\
F_\mu&\colon M_{\delta\mu}\to M_\mu
\end{align*}
be the column and the row vector with entries $E_{\alpha,n}|_{M_\mu}$ and $F_{\alpha,n}|_{M_{\mu+n\alpha}}$, resp.  Here is the third condition.

\begin{enumerate}
\item[(C3)] For all $\mu\in X$ we have $M_\mu=\ker E_\mu\oplus\im F_\mu$. 
\end{enumerate}
Now we can define the model category.
\begin{definition} The category $\CC_{\SK}=\CC_{(\SK,q)}$ is defined as follows. Its objects are the $X$-graded $\SK$-vector spaces  $M=\bigoplus_{\mu\in X}M_\mu$ endowed with $\SK$-linear endomomorphisms $E_{\alpha,n}, F_{\alpha,n}\colon M\to M$ of degree $+n\alpha$ and $-n\alpha$, resp., for all $\alpha\in\Pi$ and $n>0$, for which the conditions (C1), (C2) and (C3) are satisfied. A morphism $f\colon M\to N$  in $\CC_{\SK}$ is a degree $0$ homogeneous $\SK$-linear map  that commutes with all $E$'s and $F$'s.\end{definition}
For a morphism $f\colon M\to N$ in $\CC_\SK$ we denote by $f_\mu\colon M_\mu\to N_\mu$ its $\mu$-component.
Let us already formulate one of the main results of this article.

\begin{theorem} \begin{enumerate}
\item Each object $M$ in $\CC_\SK$ carries a unique $U_\SK$-module structure such that the following holds.
\begin{enumerate}
\item The $X$-grading on $M$ is the weight space decomposition.
\item For $\alpha\in\Pi$ and $n>0$, the homomorphisms $E_{\alpha,n}$ and $F_{\alpha,n}$ are the action maps of $e_{\alpha}^{[n]}$ and $f_{\alpha}^{[n]}$, resp.
\end{enumerate}
\item With the $U_\SK$-module structure defined by (1), each object $M$ of $\CC_\SK$ is a semisimple object in $\CO_\SK$ of finite rank. 
\item We obtain an equivalence of $\CC_\SK$ with the full subcategory of $\CO_\SK$ that contains all semisimple ojects of finite rank.
\end{enumerate}
\end{theorem}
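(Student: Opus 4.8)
The plan is to build the $U_\SK$-module structure on an object $M$ of $\CC_\SK$ by first constructing the actions of the $k_\alpha^{\pm 1}$ and $\qchoose{k_\alpha}{r}_\alpha$ from the $X$-grading via the character $\mu$ on $U_\SK^0$, and then declaring $e_\alpha^{[n]}$, $f_\alpha^{[n]}$ to act by $E_{\alpha,n}$, $F_{\alpha,n}$. To check that this is well-defined I must verify all the defining relations of $U_\SZ$ (after base change to $\SK$): the commutation of $U^0$-elements with $e^{[n]}$, $f^{[n]}$ (immediate from the degree conditions and the explicit formula for the character), the mixed commutation relation $e_\alpha^{[m]} f_\beta^{[n]} = \dots$ (this is exactly (C2), engineered from Lemma \ref{lemma-comrelsU}), and — the main point — the relations internal to $U_\SZ^+$ and $U_\SZ^-$ separately, in particular the divided-power Serre relations. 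First I would treat (C3) as the key structural input: on each weight space $M_\mu$ it gives a decomposition $M_\mu = \ker E_\mu \oplus \im F_\mu$, and by (C1) one can induct downward from the top, so that every $M_\mu$ is built up by applying $F$'s to ``highest weight'' vectors in $\bigcap_{\alpha,n} \ker E_{\alpha,n}$.

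The cleanest route is to compare $M$ with Verma modules and their semisimple quotients. For a highest weight vector $m \in M_\lambda$ (i.e. $E_{\alpha,n} m = 0$ for all $\alpha$, $n>0$), consider the $\SK$-span $M^{(m)}$ of all $F_{\alpha_1,n_1}\cdots F_{\alpha_k,n_k} m$. Using (C2) repeatedly one rewrites any $E_{\alpha,m'} F_{\alpha_1,n_1}\cdots F_{\alpha_k,n_k} m$ back into the span of such monomials applied to $m$, with coefficients that are values of the universal $\qchoose{\cdot}{r}_\alpha$-expressions at the relevant weights — i.e. exactly the coefficients that appear when computing in the Verma module $\Delta_\SK(\lambda)$ via Theorem \ref{thm-PBW} and Lemma \ref{lemma-comrelsU}. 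Hence there is a canonical surjection of ``$F$-modules'' $\Delta_\SK(\lambda) \twoheadrightarrow M^{(m)}$; the content of (C3) is that this map kills the radical of the natural form (equivalently, that $M^{(m)}$ is semisimple as soon as one knows the $U_\SK$-structure exists), so $M^{(m)}$ is a sum of copies of $L_\SK(\lambda)$. To get the $U_\SK$-structure in the first place, one checks that the $U_\SZ^+$- and $U_\SZ^-$-relations (Serre included) hold on $M$: the operators $F_{\alpha,n}$ satisfy them because, by (C3) plus the downward induction, $M$ embeds into a product of Verma-type modules on which the $F$'s act freely according to Theorem \ref{thm-PBW}(2), and the $E$'s are then forced by (C2) and the requirement that highest weight vectors be annihilated — so the $E$-relations follow by transporting the $F$-relations through the (C2)-pairing. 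This gives part (1), and part (2) follows since each $M^{(m)} \cong L_\SK(\lambda)^{\oplus ?}$ has finite-dimensional weight spaces (divided powers act locally nilpotently on $M$ by (C1)), so $M \in \CO_\SK$ is semisimple of finite rank.

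For part (3) I would construct the inverse functor: given a semisimple $M \in \CO_\SK$ of finite rank, let $E_{\alpha,n}$, $F_{\alpha,n}$ be the action maps of $e_\alpha^{[n]}$, $f_\alpha^{[n]}$; (C1) holds because $M$ has finite rank, (C2) is Lemma \ref{lemma-comrelsU}, and (C3) holds because $M$ is semisimple — on an irreducible $L_\SK(\lambda)$ one has $\ker E_\mu = $ (image of highest weight line under $F$'s landing in $M_\mu$ that are $E$-closed) and the complementarity $M_\mu = \ker E_\mu \oplus \im F_\mu$ is precisely the statement, via sl$_2$-type reasoning per simple root and Lemma \ref{lemma-comrelsU}, that $L_\SK(\lambda)_\mu$ has no ``subsingular'' vectors — i.e. the form is nondegenerate on the simple module. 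One then checks the two functors are mutually inverse: starting from $M \in \CC_\SK$, the $U_\SK$-module of part (1) has the same $E$'s, $F$'s by construction; starting from a semisimple $M \in \CO_\SK$, the uniqueness in part (1) identifies the reconstructed $U_\SK$-action with the original. Morphisms correspond on the nose since a degree-zero map commuting with all $E$'s, $F$'s, and (automatically) with $U^0$ is the same as a $U_\SK$-module map.

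The main obstacle I anticipate is verifying the Serre relations for the $F_{\alpha,n}$ (and dually the $E_{\alpha,n}$) purely from (C1)--(C3): (C2) only controls the $E$--$F$ interaction, not the relations inside $U^+$ or $U^-$, so one genuinely needs the downward-induction/embedding argument to import Theorem \ref{thm-PBW}(2) — i.e. one must show that the ``free'' $F$-action on highest weight generators provided by (C3) is forced to be a quotient of the actual $U_\SZ^-$-action, not merely of a free associative structure. Making that comparison precise — identifying the coefficients produced by iterated (C2)-rewriting with the structure constants of $U_\SZ$ from \cite{Lus90} — is the technical heart, but it is a bookkeeping argument once one sets up the Verma-module comparison carefully.
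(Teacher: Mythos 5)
Your identification of the key obstacle is correct: (C2) only controls the $E$--$F$ interaction and gives no information about relations internal to $U^-_\SK$ or $U^+_\SK$, in particular the divided-power Serre relations. But the resolution you propose does not close the gap, and is in fact circular. To produce the ``canonical surjection $\Delta_\SK(\lambda)\twoheadrightarrow M^{(m)}$'' or the ``embedding of $M$ into a product of Verma-type modules,'' you would need a well-defined map sending each PBW monomial $\prod f_\alpha^{[n_\alpha]}$ to the corresponding composite of $F_{\alpha,n}$'s applied to $m$ --- and well-definedness of such a map is precisely the assertion that the $F$'s satisfy the same relations as the $f$'s. Theorem~\ref{thm-PBW}(2) asserts freeness of $U^-_\SZ$ as a $\SZ$-\emph{module}, not as an algebra; the $f_\alpha^{[n]}$ do not act freely in the associative sense, so there is no ``free $F$-action'' to compare with. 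Moreover, iterating (C2) can never produce a Serre relation: every (C2)-rewrite is an $E$--$F$ commutation, which leaves the $F$-only part of a word untouched. No bookkeeping with (C1)--(C3) gets you from there to the Serre relations.

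The paper takes a different route that avoids ever checking the Serre relations directly on a general $M\in\CC_\SK$. It builds an explicit model $S_\SK(\lambda)$ as the radical quotient of the \emph{freely} generated path module $P_\SK(\lambda)$, whose $\epsilon_\alpha$, $\varphi_\alpha$ satisfy (C2) (Lemma~\ref{lemma-comrels}) but a priori satisfy \emph{no} Serre relations at all. The crucial tool is the extension lemma (Lemma~\ref{lemma-extmor}): morphisms in $\CC_\SK$ defined on an open set of weights extend, a fact driven by (C3). This yields the classification (Proposition~\ref{prop-semisimple}): every object of $\CC_\SK$ with finite-dimensional weight spaces decomposes as $\bigoplus_i S_\SK(\lambda_i)$. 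Then Theorem~\ref{thm-conrep} verifies that $L_\SK(\lambda)$, with $E_{\alpha,n}$, $F_{\alpha,n}$ taken to be the action maps of $e_\alpha^{[n]}$, $f_\alpha^{[n]}$, is itself an object of $\CC_\SK$ --- here (C2) is Lemma~\ref{lemma-comrelsU} and (C3) holds because $L_\SK(\lambda)$ has no primitive vectors below $\lambda$ --- so by the classification and highest-weight generation $L_\SK(\lambda)\cong S_\SK(\lambda)$ \emph{in $\CC_\SK$}. That $E$- and $F$-equivariant isomorphism transports the $U_\SK$-module structure (Serre relations included) onto $S_\SK(\lambda)$, and thence onto every $M$ via the direct-sum decomposition. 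The Serre relations are thus inherited from $L_\SK(\lambda)$ after the identification, never verified on $M$ directly. The extension lemma and the explicit path model are what your sketch is missing; they are what replace ``embedding into Verma-type modules'' with an argument that is not circular.
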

\subsection{Extending morphisms} Let us call a subset $\CJ$ of $X$  {\em open} if $\lambda\in\CJ$ and $\lambda\le\mu$ imply $\mu\in\CJ$.
A useful property of the category $\CC_\SK$ is that ``partial morphisms'', i.e. morphisms  that are defined only on open subsets, extend. 

\begin{lemma}\label{lemma-extmor} Let $M$ and $N$ be objects in $\CC_\SK$, and let $\CJ$ be an open subset of $X$.  Suppose that for all $\nu\in \CJ$ we have a $\SK$-linear homomorphism $f_{(\nu)}\colon M_\nu\to N_\nu$ such that the diagrams 

\centerline{
\xymatrix{
M_{\nu+n\alpha}\ar[r]^{f_{(\nu+n\alpha)}}\ar[d]_{F_{\alpha,n}}&N_{\nu+n\alpha}\ar[d]^{F_{\alpha,n}}\\
M_\nu\ar[r]^{f_{(\nu)}}&N_\nu
}
\quad\quad
\xymatrix{
M_{\nu+n\alpha}\ar[r]^{f_{(\nu+n\alpha)}}&N_{\nu+n\alpha}\\
M_{\nu}\ar[u]^{E_{\alpha,n}}\ar[r]^{f_{(\nu)}}&N_{\nu}\ar[u]_{E_{\alpha,n}}
}
}
\noindent
commute for all $\nu\in \CJ$, $\alpha\in\Pi$ and $n>0$.
Then there exists a morphism $f\colon M\to N$ in $\CC_\SK$ such that $f_\nu=f_{(\nu)}$ for all $\nu\in\CJ$. 
\end{lemma}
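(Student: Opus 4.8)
The strategy is to build $f$ one weight space at a time, working downward from the open set $\CJ$ along the dominance order, using (C3) at each step to see that the definition is forced and consistent. Concretely, for $\mu\in X$ let me define $\height(\gamma-\mu)$ (for $\gamma$ as in (C1) applied to $M$, enlarged if necessary so that also $N_\nu\ne0\Rightarrow\nu\le\gamma$) and induct on this quantity; if $M_\mu=0$ there is nothing to do, and if $\mu\in\CJ$ we set $f_\mu:=f_{(\nu)}$. So assume $\mu\notin\CJ$ and $M_\mu\ne0$, and that $f_\nu$ has already been constructed for all $\nu$ with $\nu>\mu$ (in particular on all of $M_{\delta\mu}$, since every summand $M_{\mu+n\alpha}$ has strictly smaller height). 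By (C3) we have $M_\mu=\ker E_\mu\oplus\im F_\mu$ and likewise $N_\mu=\ker E^N_\mu\oplus\im F^N_\mu$. On the summand $\im F_\mu$ we are forced: any $F_\mu(x)$ with $x\in M_{\delta\mu}$ must go to $F^N_\mu(f_{\delta\mu}(x))$, where $f_{\delta\mu}$ is the already-constructed map on $M_{\delta\mu}$. On the summand $\ker E_\mu$ we have complete freedom and simply set $f_\mu:=0$ there.

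The first real point to check is that the assignment on $\im F_\mu$ is well defined, i.e. that $F_\mu(x)=0$ implies $F^N_\mu(f_{\delta\mu}(x))=0$. This is where (C2) does the work: $\ker F_\mu$ is cut out by linear relations among the $F_{\alpha,n}$ which, by (C2) applied in $M$, are consequences of the $E$-$F$ commutation identities; since $f_{\delta\mu}$ already intertwines all $E$'s and $F$'s between weights lying strictly above $\mu$ (inductive hypothesis together with the hypothesis on $f_{(\nu)}$ for $\nu\in\CJ$), it carries the relations defining $\ker F_\mu$ into the relations defining $\ker F^N_\mu$. More carefully: I would argue that $\im F_\mu\cong M_{\delta\mu}/\ker F_\mu$ and that $\ker F_\mu$ is spanned by elements of two types — those supported in a single $M_{\mu+n\alpha}$ coming from relations internal to that space, and "mixed" elements produced by the $\alpha\neq\beta$ and $\alpha=\beta$ cases of (C2) — and that $f_{\delta\mu}$, being a morphism in the relevant truncated sense, preserves both types. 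This reduces to a diagram chase using the commuting squares in the hypothesis and the already-established naturality above $\mu$.

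Next one must verify that the $f_\mu$ so defined actually assemble into a morphism of $\CC_\SK$, i.e. that all squares
$$
\xymatrix{
M_{\mu+n\alpha}\ar[r]^{f_{\mu+n\alpha}}\ar[d]_{F_{\alpha,n}}&N_{\mu+n\alpha}\ar[d]^{F_{\alpha,n}}\\
M_\mu\ar[r]^{f_\mu}&N_\mu
}
\qquad
\xymatrix{
M_{\mu+n\alpha}\ar[r]^{f_{\mu+n\alpha}}&N_{\mu+n\alpha}\\
M_\mu\ar[u]^{E_{\alpha,n}}\ar[r]^{f_\mu}&N_\mu\ar[u]_{E_{\alpha,n}}
}
$$
commute for every $\mu$. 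The $F$-square: a general element of $M_{\mu+n\alpha}$ is a column entry of some $x\in M_{\delta\mu}$, and $F_{\alpha,n}$ applied to it is one component of $F_\mu(x)$; by construction $f_\mu(F_\mu(x))=F^N_\mu(f_{\delta\mu}(x))$, whose $(\alpha,n)$-component is exactly $F_{\alpha,n}(f_{\mu+n\alpha}(x_{\alpha,n}))$, so the $F$-squares commute by fiat. The $E$-square is the one that requires an argument: decompose $v\in M_\mu$ as $v=v_0+F_\mu(x)$ with $v_0\in\ker E_\mu$. Then $E_{\alpha,n}(v)=E_{\alpha,n}F_\mu(x)$, which by (C2) (the $\alpha=\beta$ and $\alpha\neq\beta$ cases) is a $U^0$-weighted sum of terms $F_{\beta,\cdot}E_{\alpha,\cdot}(\text{components of }x)$ living strictly above $\mu$; apply $f_{\mu+n\alpha}$, use that $f$ is already natural for $E$'s and $F$'s above $\mu$ and that the scalars $\qchoose{\lgl\mu,\alpha^\vee\rgl+m-n}{r}_\alpha$ depend only on $\mu$ (hence are the same in $M$ and $N$), and compare with $E_{\alpha,n}(f_\mu(v))=E_{\alpha,n}F^N_\mu(f_{\delta\mu}(x))$ expanded the same way via (C2) in $N$. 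The two expansions match term by term. Finally $f_\mu|_{\CJ}=f_{(\nu)}$ holds because on $\CJ$ we defined $f$ to be $f_{(\nu)}$ and the hypothesis says exactly that the $f_{(\nu)}$ already commute with all $E$'s and $F$'s among weights in $\CJ$; one checks the inductive construction never revisits a weight in $\CJ$.

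**Main obstacle.** The delicate step is the well-definedness of $f_\mu$ on $\im F_\mu$, i.e. showing $\ker F_\mu\subseteq\ker(F^N_\mu\circ f_{\delta\mu})$, and dually that $f_\mu$ sends $\im F_\mu$ into $\im F^N_\mu$ compatibly. Everything hinges on the fact that the obstruction to being a morphism lives entirely "above $\mu$" and is governed by the universal identities (C2), which are independent of the object; once that is set up cleanly the rest is bookkeeping. I would isolate a small lemma describing $\ker F_\mu$ and $\im F_\mu$ purely in terms of (C2) and then feed it both $M$ and $N$.
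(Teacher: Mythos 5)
Your overall architecture matches the paper's: extend $f$ one weight at a time downward, split $M_\mu=\ker E_\mu\oplus\im F_\mu$ by (C3), force $f_\mu$ on $\im F_\mu$ by compatibility with $F_\mu$, and set $f_\mu=0$ on $\ker E_\mu$. However, your treatment of the one step you yourself flag as the main obstacle --- well-definedness on $\im F_\mu$ --- contains a genuine gap. You assert that ``$\ker F_\mu$ is cut out by linear relations among the $F_{\alpha,n}$ which, by (C2), are consequences of the $E$--$F$ commutation identities,'' and that $f_{(\delta\mu)}$ therefore carries these relations across. But $\ker F_\mu$ is not a universal object: it is the kernel of the particular linear map $F_\mu\colon M_{\delta\mu}\to M_\mu$, and nothing in (C2) constrains $F_\mu$ by itself (indeed if $M_\mu=0$ then $\ker F_\mu=M_{\delta\mu}$, which is no ``consequence of commutation identities''). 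As written, the claim that (C2) alone yields $\ker F_\mu\subseteq\ker(F^N_\mu\circ f_{(\delta\mu)})$ does not hold.

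The missing ingredient is precisely (C3), used a second time: (C3) says $E_\mu$ is injective on $\im F_\mu$, equivalently $\ker F_\mu=\ker(E_\mu\circ F_\mu)$, and the same holds in $N$. The composite $E_\mu\circ F_\mu$ is a map between spaces supported strictly above $\mu$, and \emph{there} the intertwining is already known (from the hypothesis on $\CJ$ together with the inductive step); this is where (C2) genuinely enters, via the commutation identity rewriting $E_{\alpha,m}F_{\beta,n}$ as a sum of $F_{\beta,n-r}E_{\alpha,m-r}$ with scalars depending only on $\mu$. Then $x\in\ker F_\mu=\ker(E_\mu F_\mu)$ implies $E_\mu F_\mu(f_{(\delta\mu)}(x))=f_{(\delta\mu)}(E_\mu F_\mu(x))=0$, hence $F^N_\mu(f_{(\delta\mu)}(x))\in\ker E_\mu\cap\im F_\mu=0$ in $N$ by (C3). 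This is essentially what the paper does, and more economically: it first establishes the commuting square for $E_\mu\circ F_\mu$ and then invokes (C3) once, which simultaneously produces $\tilde f_{(\mu)}$ on $\im F_\mu$, establishes well-definedness, and gives the $E$-square. Your separate verification of the $E$-square at the end is fine and parallels the paper, but the well-definedness paragraph needs to be rewritten around the identity $\ker F_\mu=\ker(E_\mu F_\mu)$ rather than around a putative relational description of $\ker F_\mu$.
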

\begin{proof} By definition there is some $\gamma$ such that $M_\mu\ne 0$ or $N_\mu\ne 0$ implies $\mu\le\gamma$. We can hence enlarge $\CJ$ by adding all weights $\nu\not\le\gamma$ and setting $f_{(\nu)}=0$ for those $\nu$. After that we can use an inductive argument and need to show the following. If $\mu\in X$ is such that $\mu\not\in\CJ$ and $\CJ^\prime:=\CJ\cup\{\mu\}$ is open again, then there exists a homomorphism $f_{(\mu)}\colon M_\mu\to N_\mu$ such that the diagrams 

\centerline{
\xymatrix{
M_{\mu+n\alpha}\ar[r]^{f_{(\mu+n\alpha)}}\ar[d]_{F_{\alpha,n}}&N_{\mu+n\alpha}\ar[d]^{F_{\alpha,n}}\\
M_\mu\ar[r]^{f_{(\mu)}}&N_\mu
}
\quad\quad
\xymatrix{
M_{\mu+n\alpha}\ar[r]^{f_{(\mu+n\alpha)}}&N_{\mu+n\alpha}\\
M_{\mu}\ar[u]^{E_{\alpha,n}}\ar[r]^{f_{(\mu)}}&N_{\mu}\ar[u]_{E_{\alpha,n}}
}
}
\noindent
commute for all $\alpha\in\Pi$ and $n>0$.

Let $\alpha,\beta\in\Pi$ and $m,n>0$. We now show  that the  diagram 

\centerline{
\xymatrix{
M_{\mu+n\beta}\ar[d]_{f_{(\mu+n\beta)}}\ar[r]^{F_{\beta,n}}&M_\mu\ar[r]^{E_{\alpha,m}}&M_{\mu+m\alpha}\ar[d]^{f_{(\mu+m\alpha)}}\\
N_{\mu+n\beta}\ar[r]^{F_{\beta,n}}&N_\mu\ar[r]^{E_{\alpha,m}}&N_\mu
}
}
\noindent commutes.
Let $v\in M_{\mu+n\beta}$. Using (C2) we obtain some scalars $c_r\in\SK$ such that 
\begin{align*}
f_{(\mu+m\alpha)} E_{\alpha,m}F_{\beta,n}(v)
&=f_{(\mu+m\alpha)}\sum_{0= r}^{\min(m,n)} c_r F_{\beta,n-r}E_{\alpha,m-r}(v) \\
&=\sum_{0= r}^{\min(m,n)} c_r f_{(\mu+m\alpha)}F_{\beta,n-r}E_{\alpha,m-r}(v)\\
&=\sum_{0= r}^{\min(m,n)} c_r F_{\beta,n-r}f_{(\mu+m\alpha+(n-r)\beta)}E_{\alpha,m-r}(v)\\
&=\sum_{0= r}^{\min(m,n)} c_rF_{\beta,n-r}E_{\alpha,m-r}f_{(\mu+n\beta)}(v)\\
&=E_{\alpha,m}F_{\beta,n}f_{(\mu+n\alpha)}(v)
\end{align*}
(note that $f_{(\mu+n\beta)}(v)$ also has weight $\mu+n\beta$).
It now follows that the diagram 

\centerline{
\xymatrix{
M_{\delta\mu}\ar[d]_{f_{(\delta\mu)}}\ar[r]^{F_{\mu}}&M_\mu\ar[r]^{E_{\mu}}&M_{\delta\mu}\ar[d]^{f_{(\delta\mu)}}\\
N_{\delta\mu}\ar[r]^{F_{\mu}}&N_\mu\ar[r]^{E_\mu}&N_{\delta\mu}
}
}
\noindent commutes.
By (C3), the homomorphisms $E_\mu$ are injective on the image of $F_\mu$, so we obtain a unique induced homomorphism $\tilde f_{(\mu)}$ such that the diagram

\centerline{
\xymatrix{
M_{\delta\mu}\ar[d]_{f_{(\delta\mu)}}\ar[r]^{F_{\mu}}& \im F_\mu\ar[d]^{\tilde f_{(\mu)}}\ar[r]^{E_{\mu}}&M_{\delta\mu}\ar[d]^{f_{(\delta\mu)}}\\
N_{\delta\mu}\ar[r]^{F_{\mu}}&\im F_\mu\ar[r]^{E_\mu}&N_{\delta\mu}
}
}
\noindent commutes. By property (C3) we have $M_\mu=\ker E_\mu\oplus\im F_\mu$ and $N_\mu=\ker E_\mu\oplus\im F_\mu$. One now checks easily that
$f_{(\mu)}:=\left(\begin{matrix}0&0\\0&\tilde f_{(\mu)}\end{matrix}\right)\colon M_\mu\to N_\mu$ serves our purpose. 
\end{proof}

\section{Root paths and bilinear forms}
The goal of this section is to construct for any $\lambda\in X$ an object $S_\SK(\lambda)$ in $\CC_\SK$. We obtain $S_\SK(\lambda)$ as the quotient of an object $P_\SK(\lambda)$ with $E$- and $F$-operators that is freely generated by root paths by the radical of a bilinear form. Note that, in general, $P_\SK(\lambda)$ is not an object in $\CC_\SK$, it won't satisfy condition (C3).

\subsection{Simple root paths}

 A {\em  simple root path}, or simply a {\em root path}, is a sequence $\ul\delta:=(\delta_1,\dots,\delta_l)$ of (not necessarily distinct) simple roots $\delta_1,\dots,\delta_l\in\Pi$ with $l\ge 0$.
We denote by $l(\ul\delta)=l$ its  {\em length} and by $\height(\ul\delta):=\delta_1+\dots+\delta_l$  its {\em height}.  
For $i\in\{1,\dots,l\}$ we set
$$
\ul\delta^{(i)}:=(\delta_1,\dots,\widehat\delta_{i},\dots,\delta_l)
$$ 
(delete the $i$-th entry). This is a path of length $l-1$ and we have $\height(\ul\delta^{(i)})=\height(\ul\delta)-\delta_i$. We denote by $\ul\emptyset$ the path of length $0$.

Fix $\lambda\in X$. For $\mu\in X$ we denote by $P_\SQ(\lambda)_\mu$ the $\SQ$-vector space with basis $\{\ul\delta\mid\height(\ul\delta)=\lambda-\mu\}$, and we set 
$P_\SQ(\lambda):=\bigoplus_{\mu\in X}P_\SQ(\lambda)_\mu$. 
Let $\alpha\in\Pi$. We now define $\SQ$-linear homogeneous operators $\epsilon_\alpha,\varphi_\alpha$ on $P_\SQ(\lambda)$ of degree $+\alpha$ and $-\alpha$. For a path $\ul\delta=(\delta_1,\dots,\delta_l)$ set
\begin{align*}
\varphi_\alpha(\ul\delta)&:=(\alpha,\delta_1,\dots,\delta_l),\\
\epsilon_{\alpha}(\delta_1,\dots,\delta_l)&:=\sum_{i, \delta_i=\alpha}[\lambda-\delta_{i+1}-\dots-\delta_l,\alpha^\vee]_\alpha\ul\delta^{(i)}.
\end{align*}
Here, and in the following, we simplify notation slightly and write $[\nu,\alpha^\vee]_\alpha$ instead of $[\lgl\nu,\alpha^\vee\rgl]_\alpha$.
We define, for $n\ge0$,
$$
\epsilon_{\alpha}^{[n]}:=\epsilon_{\alpha}^n/[n]^!_\alpha,\quad \varphi_{\alpha}^{[n]}:=\varphi_{\alpha}^n/[n]^!_\alpha.
$$
These are $\SK$-linear operators on $P_\SQ(\lambda)$ of degree $+n\alpha$ and $-n\alpha$, resp. 

\subsection{Commutation relations} We now show that the operators defined above satisfy the relation in assumption (C2). 

\begin{lemma} \label{lemma-comrels} Let $\alpha,\beta\in\Pi$ and $m,n>0$. For all $\mu\in X$ and $v\in P_\SQ(\lambda)_\mu$ we have
$$
\epsilon_{\alpha}^{[m]}\varphi_\beta^{[n]}(v)=
\begin{cases}
\varphi_\beta^{[n]}\epsilon_\alpha^{[m]}(v),&\text{ if $\alpha\ne\beta$}\\
\sum_{r=0}^{\min(m,n)}\qchoose{\lgl\mu,\alpha^\vee\rgl+m-n}{r}_\alpha\varphi_{\alpha}^{[n-r]}\epsilon_{\alpha}^{[m-r]}(v),&\text{ if $\alpha=\beta$}.
\end{cases}
$$
\end{lemma}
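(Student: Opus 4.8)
The statement to prove is Lemma~\ref{lemma-comrels}, the ``Serre-relation-free'' analogue of Lemma~\ref{lemma-comrelsU}, for the path operators $\epsilon_\alpha,\varphi_\beta$ on $P_\SQ(\lambda)$. My plan is to prove it by a direct combinatorial computation on a single basis path $\ul\delta$, separating the two cases exactly as in the statement. The case $\alpha\ne\beta$ should be genuinely easy: prepending $\beta$'s to a path and deleting $\alpha$-entries are operations on disjoint sets of positions, and the coefficients $[\lambda-\delta_{i+1}-\dots-\delta_l,\alpha^\vee]_\alpha$ attached to an $\alpha$-entry at position $i$ do not change when we prepend $\beta$'s at the front (they only depend on the entries strictly to the right of $i$). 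Hence $\epsilon_\alpha$ and $\varphi_\beta$ literally commute on basis paths, and therefore so do all their divided powers; I would spell this out for $\epsilon_\alpha\varphi_\beta$ and then note divided powers are just scalar multiples of iterates.

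The substantive case is $\alpha=\beta$. Here I would first reduce to the ``rank-one'' identity, i.e.\ to $m=n=1$ generalized appropriately, by the standard trick: once one knows
$$
\epsilon_\alpha\varphi_\alpha(v)=\varphi_\alpha\epsilon_\alpha(v)+[\lgl\mu,\alpha^\vee\rgl]_\alpha\, v
\qquad\text{for }v\in P_\SQ(\lambda)_\mu,
$$
together with the weight-shift bookkeeping (applying $\varphi_\alpha$ drops the $\alpha^\vee$-value of the weight by $2d_\alpha$, applying $\epsilon_\alpha$ raises it by $2d_\alpha$), the general divided-power identity
$$
\epsilon_{\alpha}^{[m]}\varphi_\alpha^{[n]}(v)=\sum_{r=0}^{\min(m,n)}\qchoose{\lgl\mu,\alpha^\vee\rgl+m-n}{r}_\alpha\varphi_{\alpha}^{[n-r]}\epsilon_{\alpha}^{[m-r]}(v)
$$
follows by the same formal induction that proves the corresponding identity inside $U_\SZ$ itself (an $\fsl_2$-type computation; alternatively one cites that it is a consequence of the single commutator relation plus the $v$-binomial identities $\qchoose{c}{r}_\alpha=v_\alpha^{-s}\qchoose{c-1}{r}_\alpha+v_\alpha^{c-r}\qchoose{c-1}{r-1}_\alpha$, etc.). So the whole lemma comes down to verifying the displayed rank-one commutator on a basis path $\ul\delta=(\delta_1,\dots,\delta_l)$ of height $\lambda-\mu$.

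For that verification I would compute both sides explicitly. On one side, $\varphi_\alpha(\ul\delta)=(\alpha,\delta_1,\dots,\delta_l)$, and applying $\epsilon_\alpha$ produces: the $i=1$ term, which deletes the freshly prepended $\alpha$ with coefficient $[\lambda-\delta_1-\dots-\delta_l,\alpha^\vee]_\alpha=[\lgl\mu,\alpha^\vee\rgl]_\alpha$ and gives back $[\lgl\mu,\alpha^\vee\rgl]_\alpha\,\ul\delta$; plus, for each original position $i$ with $\delta_i=\alpha$, a term deleting that entry, whose coefficient is still $[\lambda-\delta_{i+1}-\dots-\delta_l,\alpha^\vee]_\alpha$ since the prepended $\alpha$ sits to its left and does not enter the suffix sum. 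On the other side, $\epsilon_\alpha(\ul\delta)$ is the sum over $\alpha$-positions $i$ of $[\lambda-\delta_{i+1}-\dots-\delta_l,\alpha^\vee]_\alpha\,\ul\delta^{(i)}$, and then $\varphi_\alpha$ prepends an $\alpha$ to each; one must check these coefficients match term-by-term. They do, because $\varphi_\alpha\ul\delta^{(i)}$ prepends $\alpha$ to the left of position $i$, again not affecting the suffix $\delta_{i+1}+\dots+\delta_l$. Subtracting, everything cancels except the $i=1$ term, leaving exactly $[\lgl\mu,\alpha^\vee\rgl]_\alpha\,\ul\delta$, which is the claim.

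\textbf{Expected main obstacle.} The only real friction is the indexing/bookkeeping: making sure the suffix-sum coefficients are genuinely unchanged under prepending (this is the conceptual heart and is clean once stated carefully), and then carrying out the formal $\fsl_2$ induction that upgrades the rank-one commutator to the stated divided-power formula without sign or shift errors — in particular getting the argument $\lgl\mu,\alpha^\vee\rgl+m-n$ of the $v$-binomial right, which forces one to track the weight of the vector at each stage of the induction. Neither step is deep; the lemma is essentially a restatement, on the combinatorial model $P_\SQ(\lambda)$, of the identity already available in $U_\SZ$ via \cite[Section 6.5]{Lus90} and used in the proof of Lemma~\ref{lemma-comrelsU}.
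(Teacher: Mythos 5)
Your proposal is correct and follows essentially the same route as the paper: verify the rank-one commutator $\epsilon_\alpha\varphi_\alpha - \varphi_\alpha\epsilon_\alpha = [\lgl\mu,\alpha^\vee\rgl]_\alpha\id$ on basis paths (the key point being exactly the suffix-sum observation you identify), handle $\alpha\ne\beta$ by the disjoint-positions argument, and then promote the rank-one identity to the full divided-power formula by the standard $\fsl_2$-type induction — which the paper isolates as Lemma~\ref{lemma-comrelgen} in the appendix by passing to the $\DZ$-grading $M_l=\bigoplus_{\lgl\mu,\alpha^\vee\rgl=l}P_\SQ(\lambda)_\mu$. One small slip in your bookkeeping remark: applying $\varphi_\alpha$ shifts $\lgl\mu,\alpha^\vee\rgl$ by $-2$, not $-2d_\alpha$ (since $\lgl\alpha,\alpha^\vee\rgl=2$); the factor $d_\alpha$ enters only through the substitution $v\mapsto v_\alpha$ in the quantum integers, which is how the paper handles it.
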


\begin{proof}
First suppose that $\alpha\ne\beta$. In this case it suffices to consider the case $m=n=1$. Let $\ul\delta=(\delta_1,\dots,\delta_l)$ be a path. Then
\begin{align*}
\epsilon_{\alpha}\varphi_\beta(\ul\delta)&=\epsilon_\alpha(\beta,\delta_1,\dots,\delta_l)\\
&=\sum_{i,\delta_i=\alpha}[\lambda-\delta_{i+1}-\dots-\delta_l,\alpha^\vee]_\alpha(\beta,\delta_1,\dots,\widehat \delta_i,\dots,\delta_l) \\
&=\sum_{i,\delta_i=\alpha}[\lambda-\delta_{i+1}-\dots-\delta_l,\alpha^\vee]_\alpha\varphi_\beta(\ul\delta^{(i)})\\ 
&=\varphi_{\beta}(\sum_{i,\delta_i=\alpha}[\lambda-\delta_{i+1}-\dots-\delta_l,\alpha^\vee]_\alpha\ul\delta^{(i)})\\
&=\varphi_\beta\epsilon_\alpha(\ul\delta).
\end{align*}

Now suppose that $\alpha=\beta$. We first consider the case $m=n=1$. For a path $\ul\delta$ of height $\lambda-\mu$ we have
 \begin{align*}
\epsilon_{\alpha}\varphi_\alpha(\ul\delta)&=\epsilon_\alpha(\alpha,\delta_1,\dots,\delta_l)\\
&=[\lambda-(\lambda-\mu),\alpha^\vee]_\alpha(\delta_1,\dots,\delta_l)+\\
&\quad+\sum_{i,\delta_i=\alpha}[\lambda-\delta_{i+1}-\dots-\delta_l,\alpha^\vee]_\alpha(\alpha,\delta_1,\dots,\widehat{\delta_i},\dots,\delta_l)\\
&=[\mu,\alpha^\vee]_\alpha\ul\delta+\varphi_{\alpha}(\sum_{i,\delta_i=\alpha}[\lambda-\delta_{i+1}-\dots-\delta_l,\alpha^\vee]_\alpha(\delta_1,\dots,\widehat{\delta_i},\dots,\delta_l))\\
&=[\mu,\alpha^\vee]_\alpha\ul\delta+\varphi_{\alpha}\epsilon_\alpha(\ul\delta).
\end{align*}
This is the claimed equation in the case $m=n=1$.

Now let $M=\bigoplus_{l\in\DZ}M_l$ be the $\DZ$-graded $\SQ$-vector space with 
$$
M_l:=\bigoplus_{\mu\in X,\lgl\mu,\alpha^\vee\rgl=l} P_\SQ(\lambda)_\mu.
$$
Then $\epsilon_\alpha$ and $\varphi_\alpha$ induce linear operators $e,f\colon M\to M$ that are homogeneous of degree $+2$ and $-2$, resp. By what we have shown above, we have $(ef-fe)|_{M_l}=[l]_\alpha\id_{M_l}$. We are hence in the situation of Lemma \ref{lemma-comrelgen} in the appendix (replace the variable $v$ in Lemma \ref{lemma-comrelgen} by the variable $v_\alpha$). From this lemma we deduce
$$
\epsilon_{\alpha}^m\varphi_\alpha^n(v)=\sum_{r=0}^{\min(m,n)}\frac{[m]_\alpha^![n]_\alpha^!}{[m-r]_\alpha^![n-r]_\alpha^!} \qchoose{\lgl\mu,\alpha^\vee\rgl+m-n}{r}_\alpha\varphi_\alpha^{n-r}\epsilon_\alpha^{m+r}(v),
$$
 or 
$$
\epsilon_{\alpha}^{[m]}\varphi_\alpha^{[n]}(v)=\sum_{r=0}^{\min(m,n)} \qchoose{\lgl\mu,\alpha^\vee\rgl+m-n}{r}_\alpha\varphi_\alpha^{[n-r]}\epsilon_\alpha^{[m-r]}(v)
$$
for all $v\in M_\mu$ and $m,n>0$.
\end{proof}

\subsection{A bilinear form} 
For an arbitrary path $\ul\gamma=(\gamma_1,\dots,\gamma_l)$ define 
\begin{align*}
\epsilon_{\ul\gamma}&:=\epsilon_{\gamma_1}\circ\dots\circ\epsilon_{\gamma_l},\\
\varphi_{\ul\gamma}&:=\varphi_{\gamma_1}\circ\dots\circ\varphi_{\gamma_l}.
\end{align*}
These are  $\SQ$-linear operators on $P_\SQ(\lambda)$ of degree $+\height(\ul\gamma)$ and $-\height(\ul\gamma)$, resp. 

For a path $\ul\delta=(\delta_1,\dots,\delta_l)$ let $\ul\delta^r:=(\delta_l,\delta_{l-1},\dots,\delta_1)$ be the reversed path. Then  $l(\ul\delta^r)=l(\ul\delta)$ and $\height(\ul\delta^r)=\height(\ul\delta)$.
If $\ul\delta$ and $\ul\gamma$ have the same height, then $\epsilon_{\ul\delta^r}(\ul\gamma)$ is a $\SQ$-multiple of the empty path. We identify $\SQ=\SQ\ul\emptyset$, so that the following definition makes sense.

\begin{definition} We denote by $b_\lambda\colon P_\SQ(\lambda)\times P_\SQ(\lambda)\to\SQ$ the bilinear form defined by
$$
b_\lambda(\ul\delta,\ul\gamma):=
\begin{cases}
0,&\text{ if $\height(\ul\delta)\ne\height(\ul\gamma)$},\\
\epsilon_{\ul\delta^r}(\ul\gamma),&\text{ if $\height(\ul\delta)=\height(\ul\gamma)$}
\end{cases}
$$
for paths $\ul\delta$, $\ul\gamma$. 
\end{definition}
\begin{proposition}
\begin{enumerate}
\item The bilinear form $b_\lambda$ is symmetric.
\item For $\mu\ne\mu^\prime$, the weight spaces $P_\SQ(\lambda)_\mu$ and $P_\SQ(\lambda)_{\mu^\prime}$ are $b_\lambda$-orthogonal.
\item For each path  $\ul\delta$, the $\epsilon_{\ul\delta}$ is biadjoint to the operator $\varphi_{\ul\delta^r}$.  
\end{enumerate}
\end{proposition}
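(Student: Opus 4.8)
The three statements are tightly linked, and the natural order is to prove (3) first, then derive (1) and (2) from it. For (3), I want to show that for every path $\ul\delta$ the operator $\epsilon_{\ul\delta}$ is biadjoint to $\varphi_{\ul\delta^r}$, i.e. $b_\lambda(\epsilon_{\ul\delta}(x),y)=b_\lambda(x,\varphi_{\ul\delta^r}(y))$ and also $b_\lambda(\varphi_{\ul\delta^r}(x),y)=b_\lambda(x,\epsilon_{\ul\delta}(y))$ for all $x,y\in P_\SQ(\lambda)$. It is enough to check this on basis vectors $x=\ul\gamma$, $y=\ul\gamma'$, and by the homogeneity of $\epsilon_\alpha,\varphi_\alpha$ only the case where the heights match on both sides contributes. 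For such $\ul\gamma,\ul\gamma'$ and $\ul\delta=(\delta_1,\dots,\delta_k)$ one unwinds the definition: $b_\lambda(\epsilon_{\ul\delta}(\ul\gamma),\ul\gamma')=\epsilon_{(\ul\gamma')^r}(\epsilon_{\ul\delta}(\ul\gamma))=\epsilon_{(\ul\gamma')^r\ast\ul\delta}(\ul\gamma)$, where $\ast$ denotes concatenation, while $b_\lambda(\ul\gamma,\varphi_{\ul\delta^r}(\ul\gamma'))=\epsilon_{(\varphi_{\ul\delta^r}(\ul\gamma'))^r}(\ul\gamma)=\epsilon_{(\ul\gamma')^r\ast\ul\delta}(\ul\gamma)$, since reversing $\varphi_{\delta_k}\cdots\varphi_{\delta_1}$ prepending the $\delta_i$ to $\ul\gamma'$ just produces the path $(\ul\gamma')^r$ followed by $\delta_1,\dots,\delta_k$. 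So both sides are literally the same expression $\epsilon_{(\ul\gamma')^r\ast\ul\delta}(\ul\gamma)$, and (3) follows; the second adjunction identity is symmetric under swapping the roles.

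Given (3), statement (1) — symmetry of $b_\lambda$ — follows by taking $\ul\delta$ to be the full path $\ul\gamma$: then $\varphi_{\ul\gamma^r}(\ul\emptyset)=\ul\gamma$, so $b_\lambda(\ul\delta,\ul\gamma)=\epsilon_{\ul\delta^r}(\ul\gamma)=\epsilon_{\ul\delta^r}(\varphi_{\ul\gamma^r}(\ul\emptyset))=b_\lambda(\varphi_{\ul\gamma^r}(\ul\emptyset),\ul\delta^r)$ — hmm, I need to be slightly more careful here. The cleanest route: identify $\SQ=\SQ\ul\emptyset$ and note $b_\lambda(\ul\delta,\ul\gamma)$ is the coefficient of $\ul\emptyset$ in $\epsilon_{\ul\delta^r}(\ul\gamma)$, equivalently $b_\lambda(\ul\delta,\ul\gamma)=b_\lambda(\epsilon_{\ul\delta^r}(\ul\gamma),\ul\emptyset)$. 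Applying the biadjointness of $\epsilon_{\ul\delta^r}$ with $\varphi_{(\ul\delta^r)^r}=\varphi_{\ul\delta}$ gives $b_\lambda(\epsilon_{\ul\delta^r}(\ul\gamma),\ul\emptyset)=b_\lambda(\ul\gamma,\varphi_{\ul\delta}(\ul\emptyset))=b_\lambda(\ul\gamma,\ul\delta)$, which is (1). Statement (2) is immediate and independent: if $\ul\delta\in P_\SQ(\lambda)_\mu$ and $\ul\gamma\in P_\SQ(\lambda)_{\mu'}$ with $\mu\ne\mu'$, then $\height(\ul\delta)=\lambda-\mu\ne\lambda-\mu'=\height(\ul\gamma)$, so $b_\lambda(\ul\delta,\ul\gamma)=0$ by definition; hence distinct weight spaces are orthogonal.

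The only genuinely delicate point is the bookkeeping in (3): making sure that concatenation of paths corresponds correctly to composition of the $\epsilon$ and $\varphi$ operators, and that reversal interacts with $\varphi$ as claimed, i.e. $(\varphi_{\ul\eta}(\ul\gamma'))^r = (\ul\gamma')^r \ast \ul\eta^r$ as sequences. This is a purely combinatorial identity about sequences that I would verify by a one-line induction on the length of $\ul\eta$, after which everything reduces to the observation that $\epsilon_{\ul\gamma_1}\circ\epsilon_{\ul\gamma_2}=\epsilon_{\ul\gamma_1\ast\ul\gamma_2}$ directly from the definition $\epsilon_{\ul\gamma}=\epsilon_{\gamma_1}\circ\cdots\circ\epsilon_{\gamma_l}$. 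I expect no real obstacle beyond keeping the order of composition and the direction of reversal straight; the scalars coming from the $[\lambda-\cdots,\alpha^\vee]_\alpha$ factors never need to be computed explicitly because both sides of each identity are the same iterated operator applied to the same basis path.
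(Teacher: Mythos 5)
There is a genuine circularity in your argument. The definition $b_\lambda(\ul\delta,\ul\gamma)=\epsilon_{\ul\delta^r}(\ul\gamma)$ (for paths of equal height) is \emph{asymmetric}: the operator comes from the \emph{first} argument and acts on the \emph{second}. Your first step,
$b_\lambda(\epsilon_{\ul\delta}(\ul\gamma),\ul\gamma')=\epsilon_{(\ul\gamma')^r}\bigl(\epsilon_{\ul\delta}(\ul\gamma)\bigr)$,
secretly uses the symmetry we are trying to prove: by definition, if $\epsilon_{\ul\delta}(\ul\gamma)=\sum_i c_i\ul\eta_i$, then $b_\lambda(\epsilon_{\ul\delta}(\ul\gamma),\ul\gamma')=\sum_i c_i\,\epsilon_{\ul\eta_i^r}(\ul\gamma')$, and this does not collapse to a single iterated $\epsilon$ acting on $\ul\gamma$ by concatenation bookkeeping, since each $\ul\eta_i$ enters through its own reversed path. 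Of the two identities making up biadjointness, exactly one is tautological from the definition, namely
$$
b_\lambda\bigl(\varphi_{\ul\delta^r}(\ul\gamma),\ul\gamma'\bigr)=\epsilon_{(\ul\delta^r\ast\ul\gamma)^r}(\ul\gamma')=\epsilon_{\ul\gamma^r\ast\ul\delta}(\ul\gamma')=\epsilon_{\ul\gamma^r}\bigl(\epsilon_{\ul\delta}(\ul\gamma')\bigr)=b_\lambda\bigl(\ul\gamma,\epsilon_{\ul\delta}(\ul\gamma')\bigr),
$$
using only $\varphi_{\ul\delta^r}(\ul\gamma)=\ul\delta^r\ast\ul\gamma$, reversal of a concatenation, and the fact that $\epsilon$ of a concatenation is the corresponding composite. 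The other identity, $b_\lambda(\epsilon_{\ul\delta}(\ul\gamma),\ul\gamma')=b_\lambda(\ul\gamma,\varphi_{\ul\delta^r}(\ul\gamma'))$, is \emph{equivalent} to symmetry of $b_\lambda$ given the tautological one --- and it is precisely the non-tautological direction that your derivation of (1) from (3) invokes. So you prove (3) by assuming (1), and then derive (1) from (3).

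The statement carrying the actual content is the symmetry (equivalently, the non-tautological adjunction for a single simple root $\alpha$; the general (3) then follows by composing, which is what the paper's reduction ``by induction to $\ul\delta=(\alpha)$'' means). Proving it requires engaging with the scalar factors $[\lambda-\delta_{i+1}-\dots-\delta_l,\alpha^\vee]_\alpha$ --- contrary to your closing remark that these ``never need to be computed explicitly.'' Already for $\ul\delta=(\alpha,\alpha,\beta)$ and $\ul\gamma=(\alpha,\beta,\alpha)$, expanding $\epsilon_{\ul\delta^r}(\ul\gamma)$ and $\epsilon_{\ul\gamma^r}(\ul\delta)$ gives two sums of products of quantum integers that do not match term by term; one must verify they agree. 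The paper delegates exactly this to the quantum analogue of Lemma~3.9 of \cite{LefOp}. Your proof of (2) and your concatenation/reversal bookkeeping are fine; the missing piece is the symmetry computation itself.
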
 
\begin{proof} Statement (2) is clear from the definition. Statement (1) is the quantum analogue of Lemma 3.9 in \cite{LefOp} and is proven in an analogous way. In order to prove (3), by induction we only need to consider the case $\ul\delta=(\alpha)$ with $\alpha\in\Pi$. Again, Lemma 3.9 in \cite{LefOp} contains the corresponding statement in the non-quantum world. The arguments in the quantum world are analogous. 
\end{proof}

\subsection{A lattice inside $P_\SQ(\lambda)$}
For any path $\ul\delta$  define a scalar  $\ul\delta^!\in\SZ\setminus\{0\}$ by induction on the length. We set $\ul\emptyset^!=1$. Suppose that $l\ge 1$ and consider $\ul\delta=(\delta_1,\dots,\delta_l)$. Set $\alpha:=\delta_1$ and $s:=\max\{i\in\{1,\dots,l\}\mid \alpha=\delta_1=\delta_2=\dots=\delta_i\}$. Now set
$$
\ul\delta^!:=[s]^!_\alpha\cdot {\ul\delta^{\prime}}^!,
$$
where $\ul\delta^\prime=(\delta_{s+1},\dots,\delta_l)$. 
For example, for $\ul\delta=(\alpha,\alpha,\beta,\gamma,\beta,\beta,\beta)$ with $\alpha\ne\beta$ and $\beta\ne\gamma$ we have $\ul\delta^!=[2]^!_\alpha[1]^!_\beta[1]^!_\gamma[3]^!_\beta$. Note that  $\ul\delta^!=\ul\delta^{r!}$. 
Now set
$$
\lgl\ul\delta\rgl:=\frac{1}{\ul\delta^!}\ul\delta\in P_\SQ(\lambda).
$$
Let  $P_\SZ(\lambda)$ be the $\SZ$-module  generated inside $P_\SQ(\lambda)$ by the elements $\lgl \ul\delta\rgl$ for all paths $\ul\delta$. Then $P_\SZ(\lambda)$ inherits an $X$-grading $P_\SZ(\lambda)=\bigoplus_{\mu\in X}P_{\SZ}(\lambda)_\mu$, and the paths $\lgl\ul\delta\rgl$ with $\height(\ul\delta)=\lambda-\mu$ form a basis of $P_\SZ(\lambda)_\mu$ as a $\SZ$-module.


\begin{lemma} \label{lemma-factorials} Let $\alpha\in\Pi$ and $n\ge 0$. Then the operators  $\epsilon_{\alpha}^{[n]}$ and $\varphi_{\alpha}^{[n]}$ stabilize $P_\SZ(\lambda)$.
\end{lemma}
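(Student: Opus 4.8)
The plan is to reduce the claim to the action of the "atomic" divided-power operators $\epsilon_\alpha^{[n]}$ and $\varphi_\alpha^{[n]}$ on a single basis element $\lgl\ul\delta\rgl$, and to check that in each case the coefficients produced lie in $\SZ$. By symmetry (using $\ul\delta^! = \ul\delta^{r!}$ and part (3) of the Proposition, or just by the symmetry of the defining formulas) it suffices to treat $\varphi_\alpha^{[n]}$; I will do that one first since it is the cleaner of the two.

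\textbf{The operator $\varphi_\alpha^{[n]}$.} Fix a path $\ul\delta = (\delta_1,\dots,\delta_l)$. We have $\varphi_\alpha^n(\ul\delta) = (\underbrace{\alpha,\dots,\alpha}_{n},\delta_1,\dots,\delta_l)$ as a single path, call it $\ul\epsilon$. The first step is to compare the two normalizing factorials. Writing $s$ for the length of the initial block of $\alpha$'s in $\ul\delta$ (so $s=0$ if $\delta_1\ne\alpha$, and $\ul\delta^! = [s]_\alpha^!\cdot{\ul\delta'}^!$ with the convention $[0]_\alpha^!=1$), the initial $\alpha$-block of $\ul\epsilon$ has length $n+s$, hence $\ul\epsilon^! = [n+s]_\alpha^!\cdot{\ul\delta'}^!$. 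Therefore
\[
\varphi_\alpha^{[n]}\lgl\ul\delta\rgl = \frac{1}{[n]_\alpha^!\,\ul\delta^!}\,\ul\epsilon
= \frac{\ul\epsilon^!}{[n]_\alpha^!\,\ul\delta^!}\lgl\ul\epsilon\rgl
= \frac{[n+s]_\alpha^!}{[n]_\alpha^!\,[s]_\alpha^!}\lgl\ul\epsilon\rgl
= \qchoose{n+s}{s}_\alpha\lgl\ul\epsilon\rgl,
\]
and $\qchoose{n+s}{s}_\alpha\in\SZ$ by Theorem \ref{thm-PBW}(1) (or directly from the well-known integrality of Gaussian binomial coefficients). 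Hence $\varphi_\alpha^{[n]}$ maps the $\SZ$-basis of $P_\SZ(\lambda)$ into $P_\SZ(\lambda)$.

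\textbf{The operator $\epsilon_\alpha^{[n]}$.} This is the step I expect to be the main obstacle, because $\epsilon_\alpha$ deletes $\alpha$-entries from arbitrary positions with weight coefficients $[\lambda-\delta_{i+1}-\dots-\delta_l,\alpha^\vee]_\alpha$, so $\epsilon_\alpha^n(\ul\delta)$ is a sum of many terms rather than a single path, and the bookkeeping of which factorials appear is more delicate. My plan is: first handle the case where $\ul\delta$ begins with a maximal $\alpha$-block of length $s\ge 1$, say $\ul\delta = (\alpha^s,\ul\delta')$ with $\ul\delta'$ not beginning in $\alpha$; the key observation is that the coefficients attached to deleting one of these first $s$ copies of $\alpha$ are $[\langle\mu',\alpha^\vee\rangle]_\alpha$ for suitable weights $\mu'$ (the $\langle\cdot,\alpha^\vee\rangle$-values form an arithmetic progression of step $2$), exactly as in the rank-one computation underlying Lemma \ref{lemma-comrels}. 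Applying the rank-one identity repeatedly (i.e. the $\fsl_2$-type computation that was packaged in the appendix lemma), one finds that $\epsilon_\alpha^{[n]}\lgl(\alpha^s,\ul\delta')\rgl$ is a $\SZ$-combination of $\lgl(\alpha^{s-n},\ul\delta')\rgl$-type elements with Gaussian-binomial coefficients — here one uses that $[s]_\alpha^!/([s-n]_\alpha^!\,[n]_\alpha^!)=\qchoose{s}{n}_\alpha\in\SZ$ to absorb the mismatch between $\ul\delta^!$ and the factorials of the output paths. For a general path one reduces to this case: conjugating $\epsilon_\alpha$ past the operators that prepend the non-$\alpha$ prefix, or more cleanly, invoking Lemma \ref{lemma-comrels} to move $\epsilon_\alpha^{[n]}$ to the right until it meets the first $\alpha$-block, produces only Gaussian-binomial scalars $\qchoose{\langle\mu,\alpha^\vee\rangle+m-n}{r}_\alpha\in\SZ$ and reduces to the block case already treated; alternatively one does a direct induction on $l(\ul\delta)$, peeling off $\delta_1$.

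\textbf{Remark on packaging.} In either order the two verifications are "the same computation" as Lemma \ref{lemma-comrels}, and I would phrase the write-up so as to reuse that lemma (and the appendix lemma it cites) as a black box, the only genuinely new content being the comparison of the normalizing factorials $\ul\delta^!$ with those of the output paths, which always differ by an integral Gaussian binomial coefficient. Once both $\epsilon_\alpha^{[n]}$ and $\varphi_\alpha^{[n]}$ are shown to stabilize the $\SZ$-lattice on basis elements, $\SZ$-linearity finishes the proof.
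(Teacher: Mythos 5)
Your proposal is correct and essentially the paper's proof: write $\lgl\ul\delta\rgl=\varphi_\beta^{[s]}\lgl\ul\delta'\rgl$ where $(\beta^s)$ is the maximal initial block, use Lemma \ref{lemma-comrels} to commute $\epsilon_\alpha^{[n]}$ past $\varphi_\beta^{[s]}$ (producing $q$-binomial scalars $c_r\in\SZ$), and induct on path length; the normalizing-factorial comparison you carry out for $\varphi_\alpha^{[n]}$, showing the mismatch is the integral $q$-binomial $\qchoose{n+s}{s}_\alpha$, is the other half, exactly as in the paper.

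One caution before you write this up: the intermediate claim that $\epsilon_\alpha^{[n]}\lgl(\alpha^s,\ul\delta')\rgl$ is a $\SZ$-combination of elements of the form $\lgl(\alpha^{s-n},\ul\delta')\rgl$ is not literally true when $\ul\delta'$ contains further $\alpha$'s --- $\epsilon_\alpha$ deletes those as well, so the output paths have varied shapes. What Lemma \ref{lemma-comrels} actually gives is
$\epsilon_\alpha^{[n]}\varphi_\beta^{[s]}\lgl\ul\delta'\rgl=\sum_r c_r\,\varphi_\beta^{[s-r]}\epsilon_\alpha^{[n-r]}\lgl\ul\delta'\rgl$,
and you then need the inductive hypothesis for $\epsilon_\alpha^{[n-r]}\lgl\ul\delta'\rgl$ together with the already-proven $\varphi$-stability to conclude. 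So the ``alternatively, induct on length'' remark at the end of your sketch is not an alternative but the version you must commit to; and note that what you peel off is the full maximal initial block, not a single $\delta_1$ (otherwise $\lgl\ul\delta\rgl\ne\varphi_{\delta_1}^{[1]}\lgl\ul\delta'\rgl$ because of the factorial normalization).
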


\begin{proof} Let  $\ul\delta=(\delta_1,\dots,\delta_l)$ be a path. We want to show that $\epsilon_{\alpha}^{[n]}(\lgl\ul\delta\rgl)$ and $\varphi_{\alpha}^{[n]}(\lgl\ul\delta\rgl)$ are contained in $P_\SZ(\lambda)$. This is clear in the case $\ul\delta=\ul\emptyset$, so we can assume $l\ge 1$. Set $\beta:=\delta_1$ and let $s\ge 1$ be such that $\beta=\delta_1=\dots=\delta_s\ne\delta_{s+1}$. Hence $\lgl\ul\delta\rgl=\varphi_{\beta}^{[s]}(\lgl\ul\delta^\prime\rgl)$ with $\ul\delta^\prime=(\delta_{s+1},\dots,\delta_l)$.

We begin with showing the $\varphi_{\alpha}^{[n]}(\lgl\ul\delta\rgl)\in P_\SZ(\lambda)$. First, suppose that $\alpha\ne\beta$. Then $\varphi_{\alpha}^{[n]}(\lgl\ul\delta\rgl)=\lgl\varphi_\alpha^n(\lgl\ul\delta\rgl)\rgl$ is contained in $P_\SZ(\lambda)$. If $\alpha=\beta$, then 
\begin{align*}
\varphi_{\alpha}^{[n]}(\lgl\ul\delta\rgl)&=\varphi_{\alpha}^{[n]}\varphi_{\alpha}^{[s]}(\lgl\ul\delta^\prime\rgl)\\
&=\frac{[n+s]_\alpha^!}{[n]_\alpha^![s]_\alpha^!}\varphi_{\alpha}^{[n+s]}(\lgl\ul\delta^\prime\rgl).
\end{align*}
As $\frac{[n+s]_\alpha^!}{[n]_\alpha^![s]_\alpha^!}\in\SZ$ and $l(\ul\delta^\prime)<l(\delta)$,  we can use induction over the length and deduce that $\varphi_{\alpha}^{[n]}(\lgl\ul\delta\rgl)\in P_\SZ(\lambda)$.

Now we want to show that  $\epsilon_{\alpha}^{[n]}(\lgl\ul\delta\rgl)\in P_\SZ(\lambda)$. We have
\begin{align*}
\epsilon_{\alpha}^{[n]}(\lgl\ul\delta\rgl)&=\epsilon_{\alpha}^{[n]}\varphi_\beta^{[s]}(\lgl\ul\delta^\prime\rgl)\\
&=\sum_{r\ge 0} c_r\varphi_{\beta}^{[s-r]}\epsilon_{\alpha}^{[n-r]}(\lgl\ul\delta^\prime\rgl)
\end{align*}
for some $c_r\in\SZ$ by Lemma \ref{lemma-comrels}. By induction we can assume $\epsilon_{\alpha}^{[n-r]}(\lgl\ul\delta^\prime\rgl)\in P_{\SZ(\lambda)}$. So $\varphi_{\beta}^{[s-r]}\epsilon_{\alpha}^{[n-r]}(\lgl\ul\delta^\prime\rgl)\in P_\SZ(\lambda)$ by what we have already shown above. It follows that $\epsilon_{\alpha}^{[n]}(\lgl\ul\delta\rgl)\in P_\SZ(\lambda)$. 
\end{proof}

For a path $\ul\gamma$ set  $\varphi_{\lgl \ul\gamma\rgl}=\frac{1}{\ul\gamma^!}\varphi_{\ul\gamma}$ and $\epsilon_{\lgl \ul\gamma\rgl}=\frac{1}{\ul\gamma^!}\epsilon_{\ul\gamma}$. Note that these operators are compositions of various $\varphi_{\alpha}^{[n]}$'s and $\epsilon_{\alpha}^{[n]}$'s, hence they stabilize $P_\SZ(\lambda)\subset P_\SQ(\lambda)$.

\begin{lemma} \label{lemma-bstab}The restriction of $b_\lambda$ to $P_\SZ(\lambda)\times P_\SZ(\lambda)$ takes values in $\SZ$.
\end{lemma}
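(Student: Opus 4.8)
The plan is to reduce the statement to a check on the distinguished $\SZ$-basis of $P_\SZ(\lambda)$, and then to recognize each resulting value as an element of the weight space $P_\SZ(\lambda)_\lambda$, which is canonically $\SZ$ itself. Since $b_\lambda$ is $\SQ$-bilinear and $P_\SZ(\lambda)$ is generated as an $\SZ$-module by the elements $\lgl\ul\delta\rgl$, it suffices to prove that $b_\lambda(\lgl\ul\delta\rgl,\lgl\ul\gamma\rgl)\in\SZ$ for all paths $\ul\delta$, $\ul\gamma$. If $\height(\ul\delta)\ne\height(\ul\gamma)$ this value is $0$, so assume $\height(\ul\delta)=\height(\ul\gamma)=\lambda-\mu$ for some $\mu\in X$.

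Next I would unwind the definitions. Using bilinearity, the definition of $b_\lambda$, and the identity $\ul\delta^!=\ul\delta^{r!}$ noted above, one obtains
$$
b_\lambda(\lgl\ul\delta\rgl,\lgl\ul\gamma\rgl)=\frac{1}{\ul\delta^!\,\ul\gamma^!}\,\epsilon_{\ul\delta^r}(\ul\gamma)=\frac{1}{\ul\delta^{r!}}\,\epsilon_{\ul\delta^r}\left(\frac{1}{\ul\gamma^!}\,\ul\gamma\right)=\epsilon_{\lgl\ul\delta^r\rgl}(\lgl\ul\gamma\rgl).
$$
By the discussion following Lemma \ref{lemma-factorials}, the operator $\epsilon_{\lgl\ul\delta^r\rgl}$ is a composition of operators $\epsilon_\alpha^{[n]}$ and hence stabilizes the lattice $P_\SZ(\lambda)$; therefore $b_\lambda(\lgl\ul\delta\rgl,\lgl\ul\gamma\rgl)\in P_\SZ(\lambda)$. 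Now I would read off the degree: $\lgl\ul\gamma\rgl\in P_\SQ(\lambda)_\mu$ and $\epsilon_{\lgl\ul\delta^r\rgl}$ is homogeneous of degree $+\height(\ul\delta^r)=\height(\ul\delta)=\lambda-\mu$, so $b_\lambda(\lgl\ul\delta\rgl,\lgl\ul\gamma\rgl)\in P_\SZ(\lambda)_\lambda$. But the $\SZ$-basis of $P_\SZ(\lambda)_\lambda$ consists of the $\lgl\ul\delta\rgl$ with $\height(\ul\delta)=\lambda-\lambda=0$, i.e.\ of the single element $\lgl\ul\emptyset\rgl=\ul\emptyset$; so $P_\SZ(\lambda)_\lambda=\SZ\ul\emptyset$, which under the identification $\SQ=\SQ\ul\emptyset$ is precisely $\SZ\subset\SQ$. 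This yields $b_\lambda(\lgl\ul\delta\rgl,\lgl\ul\gamma\rgl)\in\SZ$, completing the argument.

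I do not expect a genuine obstacle here, since the essential work has already been done in Lemma \ref{lemma-factorials} and the remark after it; the only point requiring a little care is the bookkeeping that the scalar identification $\SQ=\SQ\ul\emptyset$ restricts on the integral level to $\SZ=\SZ\ul\emptyset$, i.e.\ that $P_\SZ(\lambda)_\lambda$ is free of rank one on the empty path, which is immediate from the explicit description of the $\SZ$-basis of $P_\SZ(\lambda)$ recorded just before Lemma \ref{lemma-factorials}.
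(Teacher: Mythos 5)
Your proof is correct and follows the same route as the paper's: unwind $b_\lambda(\lgl\ul\delta\rgl,\lgl\ul\gamma\rgl)=\epsilon_{\lgl\ul\delta^r\rgl}(\lgl\ul\gamma\rgl)$ using $\ul\delta^!=\ul\delta^{r!}$, then invoke the stability of $P_\SZ(\lambda)$ under $\epsilon_{\lgl\ul\delta^r\rgl}$. The only difference is that you make the degree bookkeeping and the identification $P_\SZ(\lambda)_\lambda=\SZ\ul\emptyset=\SZ$ explicit, which the paper leaves implicit.
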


\begin{proof} For paths $\ul\delta$ and $\ul\gamma$ of the same height we have 
\begin{align*}
(\lgl\ul\delta\rgl,\lgl\ul\gamma\rgl)&=\ul\delta^{!-1}\ul\gamma^{!-1}(\ul\delta,\ul\gamma)\\
&=\ul\delta^{!-1}\ul\gamma^{!-1}\epsilon_{\ul\delta^r}(\ul\gamma)\\
&=\epsilon_{\lgl\ul\delta^r\rgl}(\lgl\ul\gamma\rgl)\in\SZ\ul\emptyset,
\end{align*}
as $P_\SZ(\lambda)$ is stable under the operator  $\epsilon_{\lgl\ul\delta^r\rgl}$.
\end{proof}
\subsection{Base change and the radical of $b_\lambda$} Now let $\SK$  again be a field, and $q\in\SK^\times$ an invertible element. As before we view $\SK$ as a $\SZ$-algebra. 
Set 
$$
P_{\SK}(\lambda):=P_\SZ(\lambda)\otimes_\SZ \SK.
$$
For any path $\ul\gamma$ we denote by the same symbols $\epsilon_{\lgl\ul\gamma\rgl}$ and $\varphi_{\lgl\ul\gamma\rgl}$  the induced $\SK$-linear operators on $P_\SK(\lambda)$, and we denote by $b_\lambda$  the bilinear form on $P_{\SK}(\lambda)$  induced by $(\cdot,\cdot)_\SZ$. 
Let
$
\rad_\SK(\lambda)\subset P_\SK(\lambda)$ be the radical of the bilinear form $b_\lambda$. Then $\rad_\SK(\lambda)$ is a graded subspace in $P_\SK(\lambda)$, i.e. $\rad_\SK(\lambda)=\bigoplus_{\mu\in X}\rad_\SK(\lambda)_\mu$.

\begin{lemma}  The radical $\rad_\SK(\lambda)$  is stable under the operators $\epsilon_{\lgl\ul\gamma\rgl}$ and $\varphi_{\lgl\ul\gamma\rgl}$ for all paths $\ul\gamma$. 
\end{lemma}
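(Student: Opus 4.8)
The statement to prove is that $\rad_\SK(\lambda)$ is stable under each $\epsilon_{\lgl\ul\gamma\rgl}$ and each $\varphi_{\lgl\ul\gamma\rgl}$. Since every $\epsilon_{\lgl\ul\gamma\rgl}$ is a composition of operators $\epsilon_{\alpha}^{[n]}$ and likewise every $\varphi_{\lgl\ul\gamma\rgl}$ is a composition of operators $\varphi_{\alpha}^{[n]}$, it suffices to prove stability under $\epsilon_{\alpha}^{[n]}$ and $\varphi_{\alpha}^{[n]}$ for a single $\alpha\in\Pi$ and $n\ge 0$. The key structural input is part (3) of the Proposition on $b_\lambda$: for each path $\ul\delta$ the operator $\epsilon_{\ul\delta}$ is biadjoint to $\varphi_{\ul\delta^r}$ with respect to $b_\lambda$. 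Dividing by the scalar $\ul\delta^!\in\SZ$ (which equals $\ul\delta^{r!}$), the same holds for the normalized operators: $\epsilon_{\lgl\ul\delta\rgl}$ is biadjoint to $\varphi_{\lgl\ul\delta^r\rgl}$, and these identities survive base change to $\SK$ since $b_\lambda$ on $P_\SK(\lambda)$ is obtained from the $\SZ$-form by extension of scalars and the operators stabilize $P_\SZ(\lambda)$ (Lemmas \ref{lemma-factorials} and \ref{lemma-bstab}).

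\textbf{Carrying it out.} Take the path $\ul\delta=(\alpha,\dots,\alpha)$ of length $n$, so that $\epsilon_{\lgl\ul\delta\rgl}=\epsilon_{\alpha}^{[n]}$ and $\varphi_{\lgl\ul\delta^r\rgl}=\varphi_{\alpha}^{[n]}$ (note $\ul\delta^r=\ul\delta$). Then for all $x,y\in P_\SK(\lambda)$ we have $b_\lambda(\epsilon_{\alpha}^{[n]}x,y)=b_\lambda(x,\varphi_{\alpha}^{[n]}y)$ and $b_\lambda(\varphi_{\alpha}^{[n]}x,y)=b_\lambda(x,\epsilon_{\alpha}^{[n]}y)$. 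Now let $x\in\rad_\SK(\lambda)$, i.e. $b_\lambda(x,z)=0$ for all $z\in P_\SK(\lambda)$. For any $y\in P_\SK(\lambda)$,
$$
b_\lambda(\epsilon_{\alpha}^{[n]}x,y)=b_\lambda(x,\varphi_{\alpha}^{[n]}y)=0,
$$
so $\epsilon_{\alpha}^{[n]}x\in\rad_\SK(\lambda)$; symmetrically $b_\lambda(\varphi_{\alpha}^{[n]}x,y)=b_\lambda(x,\epsilon_{\alpha}^{[n]}y)=0$, so $\varphi_{\alpha}^{[n]}x\in\rad_\SK(\lambda)$. Finally, for a general path $\ul\gamma$, write $\epsilon_{\lgl\ul\gamma\rgl}$ as the composition $\epsilon_{\gamma_1}\circ\cdots\circ\epsilon_{\gamma_l}$ rescaled appropriately, grouping consecutive equal entries to express it as a composite of operators of the form $\epsilon_{\alpha}^{[n]}$; since $\rad_\SK(\lambda)$ is stable under each factor it is stable under the composite, and likewise for $\varphi_{\lgl\ul\gamma\rgl}$.

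\textbf{Main obstacle.} There is no serious difficulty here; the only point requiring care is the bookkeeping that the biadjointness of part (3) of the Proposition, stated over $\SQ$ for the unnormalized operators $\epsilon_{\ul\delta}$ and $\varphi_{\ul\delta^r}$, transfers verbatim to the normalized operators over $\SK$. This follows because $\epsilon_{\lgl\ul\delta\rgl}=\ul\delta^{!-1}\epsilon_{\ul\delta}$ and $\varphi_{\lgl\ul\delta^r\rgl}=\ul\delta^{r!-1}\varphi_{\ul\delta^r}$ with $\ul\delta^!=\ul\delta^{r!}$, so the scalar cancels in the adjunction identity; and the identity $b_\lambda(\epsilon_{\lgl\ul\delta\rgl}x,y)=b_\lambda(x,\varphi_{\lgl\ul\delta^r\rgl}y)$, being an equality of elements of $\SZ$ (Lemma \ref{lemma-bstab}) valid for all basis paths $x,y\in P_\SZ(\lambda)$, remains valid after applying the ring homomorphism $\SZ\to\SK$. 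One then extends bilinearly to all of $P_\SK(\lambda)$.
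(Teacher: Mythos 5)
Your proof is correct and takes essentially the same approach as the paper: the paper's one‑line proof just invokes the biadjointness of $\epsilon_{\lgl\ul\gamma\rgl}$ and $\varphi_{\lgl\ul\gamma\rgl}$ with respect to $b_\lambda$ (Proposition part (3)), and you spell out exactly the same argument, including the (correct and worth‑noting) bookkeeping that the adjunction identities, once divided by $\ul\delta^!=\ul\delta^{r!}$ and restricted to the $\SZ$-lattice via Lemmas \ref{lemma-factorials} and \ref{lemma-bstab}, persist after base change to $\SK$. Your choice to reduce to the single-root path $\ul\delta=(\alpha,\dots,\alpha)$, which is its own reversal, is a tidy way to avoid even the cosmetic issue that in general $\epsilon_{\lgl\ul\gamma\rgl}$ is adjoint to $\varphi_{\lgl\ul\gamma^r\rgl}$ rather than $\varphi_{\lgl\ul\gamma\rgl}$.
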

\begin{proof}
This follows immediately from the fact that $\epsilon_{\lgl\ul\gamma\rgl}$ and  $\varphi_{\lgl\ul\gamma\rgl}$ are biadjoint with respect to $b_\lambda$. 
\end{proof}

We set
$$
S_\CK(\lambda):=P_\SK(\lambda)/\rad_\SK(\lambda).
$$
This is an $X$-graded space with induced operators $\epsilon_{\lgl\ul\gamma\rgl}$ and $\varphi_{\lgl\ul\gamma\rgl}$ for all paths $\ul\gamma$. Moreover, it carries a symmetric bilinear form $b_\lambda$ that is non-degenerate. We denote the linear operators induced by $\epsilon_{\alpha}^{[m]}$ and $\varphi_{\beta}^{[n]}$ by $E_{\alpha,m}$ and $F_{\beta,n}$, resp. 

\begin{proposition} \label{prop-propL} The $\SK$-vector space $S_{\SK}(\lambda)$ together with the induced $X$-grading and the set of operators $E_{\alpha,n}$,  $F_{\alpha,n}$ with $\alpha\in\Pi$,  $n>0$ is an object in $\CC_{\SK}$. It has the following properties.
\begin{enumerate}
\item $S_{\SK}(\lambda)_\lambda$ is one dimensional, and $S_{\SK}(\lambda)_\mu\ne 0$ implies $\mu\le \lambda$.
\item $\End_{\CC_{\SK}}(S_{\SK}(\lambda))=\SK\cdot \id_{S_{\SK}(\lambda)}$. In particular, an endomorphism on $S_{\SK}(\lambda)$ is an automorphism if and only if its $\lambda$-component is an automorphism.
\end{enumerate}
\end{proposition}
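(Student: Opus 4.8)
The plan is to verify conditions (C1), (C2), (C3) for $S_\SK(\lambda)$ and then establish the two additional properties. For (C1), I would use Proposition item (1) directly: the $X$-grading on $P_\SZ(\lambda)$ is concentrated in weights $\mu\le\lambda$ (a path of height $\lambda-\mu$ exists only if $\lambda-\mu$ is a nonnegative sum of simple roots), and this persists under base change and passage to the quotient. For (C2), the operators $E_{\alpha,m}$, $F_{\beta,n}$ on $S_\SK(\lambda)$ are induced from $\epsilon_\alpha^{[m]}$, $\varphi_\beta^{[n]}$ on $P_\SQ(\lambda)$, which satisfy exactly the required commutation relation by Lemma \ref{lemma-comrels}; this relation is defined over $\SZ$ (Lemma \ref{lemma-factorials} shows the operators stabilize $P_\SZ(\lambda)$, and the coefficients $\qchoose{\cdot}{r}_\alpha$ lie in $\SZ$), so it base-changes to $\SK$ and descends to the quotient by $\rad_\SK(\lambda)$.

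The heart of the argument is (C3), i.e. $M_\mu=\ker E_\mu\oplus\im F_\mu$ for $M=S_\SK(\lambda)$. First I would show $\ker E_\mu\cap\im F_\mu=0$: this is where non-degeneracy of the induced form $b_\lambda$ on $S_\SK(\lambda)$ enters. Using the biadjointness from the Proposition preceding this one ($\epsilon_\alpha^{[m]}$ biadjoint to $\varphi_\alpha^{[m]}$, more precisely $\epsilon_{\ul\delta}$ biadjoint to $\varphi_{\ul\delta^r}$), an element $x\in\im F_\mu$, say $x=F_\mu(y)$, pairs with $M_\mu$ via $b_\lambda(F_\mu(y),z)=b_\lambda(y,E_\mu(z))$ suitably interpreted componentwise; if moreover $x\in\ker E_\mu$ then one gets $b_\lambda(x,x')=0$ for all $x'\in\im F_\mu$ while $x$ is automatically orthogonal to $\ker E_\mu$ as well — hence $x$ is in the radical, so $x=0$ in $S_\SK(\lambda)$. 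For the spanning statement $M_\mu=\ker E_\mu+\im F_\mu$, I would argue by downward induction on the weight (which terminates by (C1)): given $v\in M_\mu$, the component $E_\mu(v)\in M_{\delta\mu}$ can, by the inductive hypothesis applied in the higher weights together with the relations (C2), be shown to lie in the image of $E_\mu\circ F_\mu$; subtracting off a suitable $F_\mu$-correction puts $v$ into $\ker E_\mu$ modulo $\im F_\mu$. This bookkeeping is the main obstacle: one must check that the map $E_\mu F_\mu$ on $\im F_\mu$ is surjective onto $E_\mu(M_\mu)$, which again comes down to non-degeneracy of $b_\lambda$ on the relevant finite-dimensional weight space plus the explicit form of the coefficients in (C2).

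For property (1), one-dimensionality of $S_\SK(\lambda)_\lambda$: $P_\SZ(\lambda)_\lambda$ is spanned by the single empty path $\ul\emptyset$, and $b_\lambda(\ul\emptyset,\ul\emptyset)=1\ne0$, so $\ul\emptyset\notin\rad_\SK(\lambda)$ and $S_\SK(\lambda)_\lambda=\SK\cdot\ul\emptyset$; the support condition $S_\SK(\lambda)_\mu\ne0\Rightarrow\mu\le\lambda$ is inherited from $P_\SZ(\lambda)$ as above. For property (2), let $f\in\End_{\CC_\SK}(S_\SK(\lambda))$. Since $S_\SK(\lambda)_\lambda$ is one-dimensional, $f_\lambda=c\cdot\id$ for some $c\in\SK$. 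The subspace $X$-generated by $\ul\emptyset$ under the operators $F_{\alpha,n}$ is all of $S_\SK(\lambda)$ — indeed every path $\ul\delta$ equals (a scalar multiple of) $\varphi_{\lgl\ul\delta\rgl}(\ul\emptyset)$, so $S_\SK(\lambda)$ is generated by its highest weight line. A morphism commuting with all $F$'s is therefore determined by its value on $\ul\emptyset$, giving $f=c\cdot\id_{S_\SK(\lambda)}$. The final clause follows: if $f$ has invertible $\lambda$-component then $c\ne0$, so $f=c\cdot\id$ is invertible; the converse is trivial. I expect the one genuinely delicate point throughout to be (C3), specifically isolating exactly which non-degeneracy/surjectivity fact about $b_\lambda$ on each weight space is needed and confirming it follows purely formally from biadjointness and the already-established commutation relations, without re-deriving representation-theoretic input.
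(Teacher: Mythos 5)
Your verification of (C1), (C2), and the two numbered properties is essentially the paper's argument. The gap is in (C3), and it is a genuine one. You first claim $\ker E_\mu\cap\im F_\mu=0$ by showing that any $x$ in the intersection is orthogonal to both $\ker E_\mu$ and $\im F_\mu$ and is therefore in the radical --- but that conclusion only follows if $\ker E_\mu+\im F_\mu$ is already known to equal $M_\mu$, which is exactly the spanning statement you defer to later. So the two halves of your (C3) argument are logically intertwined in a way you have not resolved. Moreover, the spanning step you sketch (downward induction, surjectivity of $E_\mu F_\mu$ onto $E_\mu(M_\mu)$) is, as you yourself acknowledge, left unverified, and it would require substantially more bookkeeping than the problem actually demands.

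The observation that makes the paper's (C3) proof collapse into two lines --- and which you actually state later but only deploy for property (2) --- is that the vectors $\varphi_{\lgl\ul\gamma\rgl}(\ul\emptyset)$ span $P_\SK(\lambda)$, hence their images span $S_\SK(\lambda)$. Consequently, for every $\mu<\lambda$ one has $\im F_\mu=S_\SK(\lambda)_\mu$ \emph{outright}; there is no separate spanning argument to make. Given that, (C3) reduces to showing $\ker E_\mu=0$ for $\mu<\lambda$ (the case $\mu=\lambda$ is trivial since $\im F_\lambda=0$). And that is immediate: by biadjointness, $\ker E_\mu$ is $b_\lambda$-orthogonal to $\im F_\mu=S_\SK(\lambda)_\mu$, so any $v\in\ker E_\mu$ lies in the radical of the (non-degenerate, weight-space-orthogonal) form $b_\lambda$ and is therefore zero. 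You should restructure your proof of (C3) around this fact rather than attempting a generic direct-sum argument.
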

\begin{proof} In order to show that $S_{\SK}(\lambda)$ is an object in $\CC_\SK$, we need to show that it  satisfies properties (C1), (C2) and (C3). By construction, $S_\SK(\lambda)_\mu\ne 0$ implies $\mu\le\lambda$, so property (C1) holds. Property (C2) follows from the commutation relations  that we have proven in Lemma \ref{lemma-comrels}.  So we need to verify (C3). As $\lambda$ is the highest weight of $S_{\SK}(\lambda)$ we deduce $\im F_\lambda=0$ and $\ker E_\lambda=S_{\SK}(\lambda)_\lambda$, so $S_\SK(\lambda)_\lambda=\im F_\lambda\oplus\ker E_\lambda$. Now suppose $\mu<\lambda$. As the elements $\varphi_{\lgl\ul\gamma\rgl}(\ul\emptyset)$, where $\ul\gamma$ runs over all paths, form a basis of $P_{\SK}(\lambda)$, and hence a generating set for $S_{\SK}(\lambda)$, we have $\im F_\mu=S_{\SK}(\lambda)_\mu$. So it remains to show that $\ker E_\mu=0$. So let $v\in\ker E_\mu$. Then $b_\lambda(v,F_{\alpha,m}(w))=b_\lambda(E_{\alpha,m}(v),w)=0$ for all $\alpha\in\Pi$, $m>0$ and $w\in S_\SK(\lambda)_{\mu+m\alpha}$. Hence $\ker E_\mu$ is $b_\lambda$-orthogonal to $\im F_\mu$. As $\im F_\mu=S_\SK(\lambda)_\mu$ and as $b_\lambda$ is non-degenerate and the weight space decomposition is orthogonal, we deduce $\ker E_\mu=0$.  So also property (C3) is satisfied.  Hence $S_{\SK}(\lambda)$  is indeed an object in $\CC_\SK$. The properties stated in (1) follow immediately from the construction.

So let us show that property (2) holds. Let $f$ be an endomorphism on $S_{\SK}(\lambda)$. As $S_{\SK}(\lambda)_\lambda$ is one-dimensional, we can identify $f_\lambda$ with an element in $\SK$. We set $g:=f-f_\lambda\id_{S_{\SK}(\lambda)}$. This now is an endomorphism that vanishes on $S_{\SK}(\lambda)_\lambda$. But then $g(\varphi_{\lgl\ul\gamma\rgl}(\ul\emptyset))=\varphi_{\lgl\ul\gamma\rgl}g(\ul\emptyset)=0$. As $S_{\SK}(\lambda)$ is generated as a $\SK$-vector space by the elements $\varphi_{\lgl\ul\gamma\rgl}(\ul\emptyset)$ we deduce $g=0$, hence $f=f_\lambda\cdot \id_{S_{\SK}(\lambda)}$. 
\end{proof}

\subsection{Semisimplicity of $\CC_\SK$}

Now we show that, essentially, we have already constructed all objects in $\CC_\SK$.

\begin{proposition}\label{prop-semisimple} Let $M$ be an object in $\CC_{\SK}$ and suppose that $M_\mu$ is finite dimensional for all $\mu\in X$. Then there exists an index set $I$ and weights $\lambda_i\in X$ for $i\in I$ such  that $M\cong S_\SK(\lambda_1)\oplus\dots\oplus S_\SK(\lambda_l)$. 
\end{proposition}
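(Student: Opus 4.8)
The plan is to decompose $M$ as a direct sum of copies of the $S_\SK(\lambda_i)$ by a highest-weight induction, building the isomorphism weight space by weight space using the extension lemma (Lemma~\ref{lemma-extmor}). First I would observe that since each $M_\mu$ is finite dimensional and the support of $M$ is bounded above (by (C1)), we can choose a maximal weight $\lambda$ with $M_\lambda\ne 0$; then every nonzero $v\in M_\lambda$ is a ``highest weight vector'' in the sense that $E_{\alpha,n}v=0$ for all $\alpha,n>0$ (as $M_{\lambda+n\alpha}=0$). Picking a basis $v_1,\dots,v_k$ of $M_\lambda$ one gets, for each $j$, a candidate morphism $S_\SK(\lambda)\to M$: on the top space send $\ul\emptyset\mapsto v_j$, and try to extend. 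The natural way is to define $\varphi_{\lgl\ul\gamma\rgl}(\ul\emptyset)\mapsto \varphi_{\lgl\ul\gamma\rgl}v_j$ on all of $P_\SK(\lambda)$ (this is well-defined since the $\varphi_{\lgl\ul\gamma\rgl}(\ul\emptyset)$ form a basis of $P_\SK(\lambda)$), check it commutes with the $E$'s and $F$'s using (C2) on $M$ and Lemma~\ref{lemma-comrels} on $P_\SK(\lambda)$, and then verify it kills $\rad_\SK(\lambda)$, so it descends to a morphism $s_j\colon S_\SK(\lambda)\to M$.

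The key point for the last step is a compatibility of the bilinear form $b_\lambda$ on $P_\SK(\lambda)$ with the ``matrix coefficient'' pairing on $M$. Concretely: if $w\in P_\SK(\lambda)_\mu$ and $E_{\ul\delta^r}$ maps it (up to scalar) to a multiple $b_\lambda(\ul\delta,w)\ul\emptyset$ of the top path, then in $M$ the element $\epsilon$-image of $\varphi(w)v_j$ back in $M_\lambda$ is that same scalar times $v_j$, because the $\epsilon$/$\varphi$ relations on $P_\SK(\lambda)$ are identical to the $E$/$F$ relations (C2) on $M$ and $M_\lambda$ is one-dimensional along $v_j$ in the relevant sense after we also handle the other basis vectors. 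Thus if $w\in\rad_\SK(\lambda)_\mu$ then $b_\lambda(\ul\delta,w)=0$ for all $\ul\delta$ of height $\lambda-\mu$, and one shows $\varphi(w)v_j$ must vanish because any obstruction to vanishing would be detected by composing with some $\epsilon_{\ul\delta^r}$ and landing back in $M_\lambda$ — here (C3) applied to $M$ is what guarantees that a nonzero vector in $M_\mu$ not in $\ker E_\mu$ is detected, and vectors in $\ker E_\mu$ are handled by downward induction on the weight. Hence each $s_j$ is a well-defined morphism, and restricted to the top space $s_j$ is injective, so $\bigoplus_j s_j\colon S_\SK(\lambda)^{\oplus k}\to M$ is injective on the $\lambda$-weight space.

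Next I would show this map is in fact split injective with image a direct summand in $\CC_\SK$. The natural candidate complement is obtained dually: since $b_\lambda$ is nondegenerate on $S_\SK(\lambda)$, and $M$ carries a symmetric invariant pairing only if one builds one — so instead I would argue more directly that $N:=\bigoplus_j s_j(S_\SK(\lambda))$ is a subobject with $N_\lambda=M_\lambda$, and then use (C3) on $M$ to produce a complement. The cleanest route: set $M':=\bigcap_{\alpha,n}\ker(E_{\alpha,n})$ restricted appropriately, or rather, use that $N$ is a summand because there is a retraction $M\to S_\SK(\lambda)^{\oplus k}$ built from matrix coefficients against the $v_j$ using (C3) to split $M_\mu=\ker E_\mu\oplus\im F_\mu$ inductively. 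Granting $M\cong S_\SK(\lambda)^{\oplus k}\oplus M''$ in $\CC_\SK$ with $M''$ supported on weights $\mu$ with $\mu\le\lambda$ but $M''_\lambda=0$, by the remark after (C1) the support of $M''$ is again bounded above, each weight space is still finite dimensional, and $M''$ has strictly ``smaller'' support in the sense that its set of maximal weights has moved down; so Noetherian-type induction (on, say, the finite poset of maximal weights together with the dimensions of the corresponding weight spaces, or simply on $\sum_\mu \dim M''_\mu$ when $M$ is finitely supported, handling the general case by a direct-limit/componentwise argument since $\CC_\SK$-objects decompose over their ``blocks'') finishes the proof.

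The main obstacle I expect is verifying that each $s_j$ annihilates $\rad_\SK(\lambda)$ — i.e. the assertion that $w\in\rad_\SK(\lambda)$ forces $\varphi(w)v_j=0$ in $M$. This is where one genuinely uses the structure of $\CC_\SK$ rather than just $P_\SK(\lambda)$: the argument must combine the fact that the $b_\lambda$-pairing equals (via the identical commutation relations) the pairing $\langle E_{\ul\delta^r}\varphi(w)v_j,v_j^*\rangle$ in $M$, with an induction on weights using (C3) to conclude that a vector in $M_\mu$ killed by all $E_{\alpha,n}$ and orthogonal (in this matrix-coefficient sense) to everything in $\im F_\mu$ must be zero — exactly the $\ker E_\mu=0$ type argument already carried out in the proof of Proposition~\ref{prop-propL}. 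Making this precise for all basis vectors $v_j$ simultaneously (so that the relevant pairing is nondegenerate) is the technical heart; everything else is bookkeeping with the extension lemma and the finite-dimensionality hypothesis.
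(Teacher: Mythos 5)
Your proposal leaves the two substantive steps unproved: (i) the claim that the map $P_\SK(\lambda)\to M$, $\lgl\ul\gamma\rgl\mapsto\varphi_{\lgl\ul\gamma\rgl}v_j$, kills $\rad_\SK(\lambda)$, and (ii) the construction of a retraction $M\to S_\SK(\lambda)^{\oplus k}$. You yourself flag (i) as ``the technical heart'' and only sketch the idea, and (ii) is only gestured at (``built from matrix coefficients \dots using (C3) \dots inductively''). Neither is routine: there is no invariant pairing supplied on $M$ with which to detect nonvanishing, and the orthogonality argument would have to be threaded carefully through the splitting $M_\mu=\ker E_\mu\oplus\im F_\mu$ by downward induction, and then repeated simultaneously for all $v_j$; moreover injectivity of $\bigoplus_j s_j$ at $\lambda$ does not by itself give injectivity below $\lambda$, so the retraction is genuinely needed and not built. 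As written there is a real gap.

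The paper sidesteps all of this by using Lemma~\ref{lemma-extmor}, which is precisely designed to replace the by-hand construction on $P_\SK(\lambda)$. Take $\CJ=\{\mu\in X\mid\lambda\le\mu\}$ (open), and note that on $\CJ$ the only nonzero weight space of either $M$ or $S_\SK(\lambda)^{\oplus n}$ (with $n=\dim M_\lambda$) is the one at $\lambda$. Thus a choice of linear isomorphism $f_{(\lambda)}\colon S_\SK(\lambda)_\lambda^{\oplus n}\to M_\lambda$, together with the zero maps above $\lambda$, vacuously satisfies the hypotheses of Lemma~\ref{lemma-extmor}, which then extends it to a morphism $f\colon S_\SK(\lambda)^{\oplus n}\to M$ in $\CC_\SK$. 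The same lemma, applied in the other direction, extends $f_{(\lambda)}^{-1}$ to a morphism $\tilde f\colon M\to S_\SK(\lambda)^{\oplus n}$. Now $\tilde f\circ f$ is an endomorphism of $S_\SK(\lambda)^{\oplus n}$ restricting to the identity at $\lambda$, and Proposition~\ref{prop-propL}(2) forces $\tilde f\circ f=\id$; hence $f$ is split injective, $S_\SK(\lambda)^{\oplus n}$ is a direct summand of $M$, and the complement $N$ satisfies $N_\lambda=0$, so one concludes by induction. In particular, both the radical-killing you worried about and the retraction drop out of the extension lemma together with the computation of $\End_{\CC_\SK}(S_\SK(\lambda))$; no argument on $P_\SK(\lambda)$ is needed.
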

\begin{proof} By property (C1) there is a maximal element $\lambda\in X$ such that $M_\lambda\ne 0$. We set $\CJ:=\{\mu\in X\mid \lambda\le\mu\}$. Note that this is an open subset of $X$. Set $n=\dim M_\lambda$. Then we can choose a vector space isomorphism $f_{(\lambda)}\colon S_\SK(\lambda)_\lambda^{\oplus n}\xrightarrow{\sim} M_\lambda$. We denote by $f_\mu\colon S_\SK(\lambda)_\mu\to M_\mu$ the zero homomorphism for all $\mu\in\CJ$, $\mu\ne \lambda$ (both sides are $0$ anyways). Then we can apply Lemma \ref{lemma-extmor} and deduce that there is a morphism $f\colon S_\SK(\lambda)^{\oplus n}\to M$ in $\CC_\SK$ with $f_\lambda=f_{(\lambda)}$. In the same way we obtain an extension $\tilde f\colon M\to S_\SK(\lambda)^{\oplus n}$ of $f_{(\lambda)}^{-1}\colon M_\lambda\xrightarrow{\sim} S_\SK(\lambda)^{\oplus n}_\lambda$. The composition $\tilde f\circ f$ is an endomorphism of $S_\SK(\lambda)^{\oplus n}$ that restricts to the identity on the highest weight space. Proposition \ref{prop-propL} implies that $\tilde f\circ f$ is the identity. Hence $S_\SK(\lambda)^{\oplus n}$ is a direct summand of $M$, and a direct complement $N$ satisfies $N_\lambda=0$. From here we can proceed by induction. 
\end{proof}

\subsection{Connection to representation theory}

The following shows that we dealt with $X$-graded representations of $U_\SK$ all along. 
\begin{theorem}\label{thm-conrep} There exists a unique  $U_{\SK}$-module structure on $S_{\SK}(\lambda)$ such that $E_{\alpha,n}$ and $F_{\alpha,n}$ are the action maps of $e_\alpha^{[n]}$ and $f_\alpha^{[n]}$, resp., and $k_\alpha$, $k_{\alpha}^{-1}$ act on $S_\SK(\lambda)_\mu$ via the character $\mu$, for all $\alpha\in\Pi$, $n>0$ and $\mu\in X$. This makes $S_{\SK}(\lambda)$ into an object in $\CO_{\SK}$ that isomorphic to $L_{\SK}(\lambda)$. 
\end{theorem}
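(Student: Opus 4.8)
The plan is to reverse the usual order of business: rather than building a $U_\SK$-action on $S_\SK(\lambda)$ by hand, I will first recognize the irreducible module $L_\SK(\lambda)$ as an object of $\CC_\SK$, then match it with $S_\SK(\lambda)$ inside $\CC_\SK$ via Proposition \ref{prop-semisimple}, and finally transport the module structure along the resulting isomorphism. \emph{Step 1: $L_\SK(\lambda)\in\CC_\SK$.} Equip $L_\SK(\lambda)$ with its weight grading and let $E_{\alpha,n}$, $F_{\alpha,n}$ be the action maps of $e_\alpha^{[n]}$, $f_\alpha^{[n]}$. Condition (C1) holds since all weights are $\le\lambda$, and (C2) is exactly Lemma \ref{lemma-comrelsU}. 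For (C3), fix $\mu$: if $\mu\not\le\lambda$ there is nothing to check; if $\mu=\lambda$ then $\im F_\lambda=0$ and $\ker E_\lambda=L_\SK(\lambda)_\lambda$ since $\lambda$ is the highest weight; and if $\mu<\lambda$ then $\im F_\mu=L_\SK(\lambda)_\mu$ because $L_\SK(\lambda)$ is generated by its one-dimensional top weight space under $U_\SK^-$, while $\ker E_\mu=0$ because a nonzero vector there is annihilated by all of $U_\SK^{>0}$ and would hence generate a nonzero proper $U_\SK$-submodule (its weights lying below $\mu<\lambda$), contradicting irreducibility. Thus $L_\SK(\lambda)_\mu=\ker E_\mu\oplus\im F_\mu$ in every case.

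\emph{Step 2: identification in $\CC_\SK$.} Each $L_\SK(\lambda)_\mu$ is finite dimensional, being a quotient of $\Delta_\SK(\lambda)_\mu$, which is finite dimensional by the PBW Theorem \ref{thm-PBW}. So Proposition \ref{prop-semisimple} gives an isomorphism $L_\SK(\lambda)\cong\bigoplus_i S_\SK(\nu_i)$ in $\CC_\SK$. Realized inside $L_\SK(\lambda)$, each summand $S_\SK(\nu_i)$ is a sum of weight spaces, stable under all $E_{\alpha,n}$ and $F_{\alpha,n}$ (hence under $U_\SK^+$ and $U_\SK^-$) and under the diagonal action of $U_\SK^0$; since $U_\SK=U_\SK^-U_\SK^0U_\SK^+$ by Theorem \ref{thm-PBW}(4), it is a $U_\SK$-submodule of $L_\SK(\lambda)$. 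Irreducibility forces a single summand, and comparing highest weights via Proposition \ref{prop-propL}(1) shows it is $S_\SK(\lambda)$. Fix a $\CC_\SK$-isomorphism $g\colon S_\SK(\lambda)\xrightarrow{\sim}L_\SK(\lambda)$.

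\emph{Step 3: transport and uniqueness.} Define a $U_\SK$-action on $S_\SK(\lambda)$ by $u\cdot x:=g^{-1}(u\cdot g(x))$. Since $g$ has degree $0$ and intertwines the $E$'s and $F$'s, under this action $e_\alpha^{[n]}$ acts as the given operator $E_{\alpha,n}$, $f_\alpha^{[n]}$ acts as $F_{\alpha,n}$, the $X$-grading is the weight decomposition, and $k_\alpha^{\pm1}$ acts on $S_\SK(\lambda)_\mu$ through the character $\mu$ (because $L_\SK(\lambda)$ is a weight module of type $1$ and $g$ restricts to an isomorphism of $\mu$-weight spaces). By construction $S_\SK(\lambda)\cong L_\SK(\lambda)$ as $U_\SK$-modules, so $S_\SK(\lambda)$ is an object of $\CO_\SK$ and is the irreducible of highest weight $\lambda$. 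Uniqueness is automatic: the elements $e_\alpha^{[n]}$, $f_\alpha^{[n]}$ ($\alpha\in\Pi$, $n\ge0$) together with $U_\SK^0$ generate $U_\SK$, and the action prescribed on them is fixed.

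The real content sits in Step 1, in the verification of (C3) for $L_\SK(\lambda)$ --- specifically the absence of a primitive vector of weight strictly below $\lambda$, which is the one place where simplicity of $L_\SK(\lambda)$ is genuinely exploited. A secondary point needing care is the observation in Step 2 that the $\CC_\SK$-summands are honest $U_\SK$-submodules, which rests on Lusztig's description of the integral form $U_\SZ$ as generated over $\SZ$ by the divided powers $e_\alpha^{[n]}$, $f_\alpha^{[n]}$ and the generators of $U_\SZ^0$ (Theorem \ref{thm-PBW}).
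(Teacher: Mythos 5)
Your proposal is correct and follows essentially the same route as the paper: view $L_\SK(\lambda)$ with its weight grading and divided-power action maps as an object of $\CC_\SK$ (the key point being the absence of primitive vectors below $\lambda$ for (C3)), invoke Proposition \ref{prop-semisimple} to identify it with $S_\SK(\lambda)$, and deduce uniqueness from the generation of $U_\SK$ by the $e_\alpha^{[n]}$, $f_\alpha^{[n]}$ and $U_\SK^0$. Your write-up merely makes explicit a few points the paper leaves implicit (finite-dimensionality of weight spaces, the fact that $\CC_\SK$-summands are $U_\SK$-submodules, and the transport of structure), and uses irreducibility rather than cyclicity to isolate the single summand.
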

\begin{proof} We actually prove a reverse statement. Consider $L_{\SK}(\lambda)$ as an $X$-graded space with operators by letting $E_{\alpha,n}$ and $F_{\alpha,n}$ be the action maps of $e_{\alpha}^{[n]}$ and $f_{\alpha}^{[n]}$, resp. Then property (C1) is obviously satisfied by this data, and Lemma \ref{lemma-comrelsU} implies (C2).  As $\lambda$ is the maximal weight, we have $L_\SK(\lambda)=\ker E_\lambda$ and $\im F_\lambda=0$. As $L_{\SK}(\lambda)_\mu$ has no non-trivial primitive vectors for $\mu\ne \lambda$, we deduce (C3). So $L_\SK(\lambda)$ can be viewed as an object in $\CC_\SK$.  Proposition \ref{prop-semisimple} implies now that $L_{\SK}(\lambda)$ is isomorphic to a direct sum of copies of various $S_{\SK}(\mu)$'s. As $L_{\SK}(\lambda)$ is generated (over the $F_{\alpha,n}$-maps) by a non-zero vector of weight $\lambda$, we deduce $L_{\SK}(\lambda)\cong S_{\SK}(\lambda)$ in $\CC_{\SK}$.  This module structure is unique, as  $U_\SK$ is generated by the elements $e_\alpha^{[n]}$, $f_\alpha^{[n]}$ with $\alpha\in\Pi$, $k_\alpha$, $k_\alpha^{-1}$ for $\alpha\in\Pi$ and $n\ge 0$.
\end{proof}

\section{Cyclotomic polynomials and periodicity}

The aim of this section is to study the matrix of the bilinear forms $b_\lambda$ in terms of the basis of simple root paths. It turns out that we can view the entries of this matrix as {\em quantum polynomial functions}. 

\subsection{Quantum polynomial functions associated to paths}
Let $f\colon X\to\DZ$ be a group homomorphism, and $\alpha\in\Pi$. We denote by $[f(\cdot)]_{\alpha}\colon X\to\SZ$ the map $\lambda\mapsto [f(\lambda)]_{\alpha}= \frac{v_\alpha^{ f(\lambda)}-v_\alpha^{-f(\lambda)}}{v_\alpha-v^{-1}}$. 

\begin{definition}  We denote by $\SZ[X]$ the $\SZ$-subalgebra inside the algebra of all maps from $X$ to $\SZ$ that is generated by the maps of the form $[f(\cdot)]_{\alpha}$ for all group homomorphisms $f\colon X\to \DZ$ and $\alpha\in\Pi$. We  set $\SQ[X]:=\SZ[X]\otimes_\SZ\SQ$.
\end{definition}

For paths $\ul\delta,\ul\gamma$ of the same height define a function $\tilde a_{\ul\delta,\ul\gamma}\colon X\to \SZ$ by 
$$
\tilde a_{\ul\delta,\ul\gamma}(\lambda)=b_\lambda(\ul\delta,\ul\gamma).
$$
Then the following is immediate.

\begin{lemma} \begin{enumerate}
\item We have $\tilde a_{\ul\emptyset,\ul\emptyset}(\lambda)=1$ for all $\lambda\in X$.
\item We have $\tilde a_{\ul\delta,\ul\gamma}=\tilde a_{\ul\gamma,\ul\delta}$ for all paths $\ul\delta$, $\ul\gamma$ of the same height.
\item We have   
$$
\tilde a_{\ul\delta,\ul\gamma}(\lambda)=\sum_{i,\delta_l=\gamma_i}[\lambda-\gamma_{i+1}-\dots-\gamma_{l},\delta_1^\vee]_{\delta_1} \tilde a_{\ul\delta^{(l)}\ul\gamma^{(i)}}(\lambda)
$$
 for all paths $\ul\delta$, $\ul\gamma$ of the same height and all $\lambda\in X$.
\end{enumerate}
\end{lemma}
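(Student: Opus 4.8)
The plan is to read all three statements off directly from the definition $b_\lambda(\ul\delta,\ul\gamma)=\epsilon_{\ul\delta^r}(\ul\gamma)$ (for $\ul\delta,\ul\gamma$ of the same height) together with the explicit formula for the operators $\epsilon_\alpha$; nothing deep is involved, and the only point that needs attention is the order/reversal bookkeeping. For~(1): the reverse of the empty path is $\ul\emptyset$ and $\epsilon_{\ul\emptyset}$ is the empty composition, i.e.\ the identity operator on $P_\SQ(\lambda)$, so $\tilde a_{\ul\emptyset,\ul\emptyset}(\lambda)=\epsilon_{\ul\emptyset}(\ul\emptyset)=\ul\emptyset$, which equals $1$ under the identification $\SQ=\SQ\ul\emptyset$; as this holds for every $\lambda$ we get $\tilde a_{\ul\emptyset,\ul\emptyset}\equiv 1$. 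For~(2): this is exactly the assertion that, for each fixed $\lambda$, the form $b_\lambda$ is symmetric, which is part~(1) of the proposition on $b_\lambda$ above (established there as the quantum analogue of Lemma~3.9 of \cite{LefOp}); evaluating at each $\lambda\in X$ gives $\tilde a_{\ul\delta,\ul\gamma}=\tilde a_{\ul\gamma,\ul\delta}$. (That $\tilde a_{\ul\delta,\ul\gamma}$ takes values in $\SZ$ is clear, since every coefficient $[\lgl\nu,\alpha^\vee\rgl]_\alpha$ occurring in an $\epsilon$-operator lies in $\SZ$.)

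The substance is in~(3). Fix paths $\ul\delta=(\delta_1,\dots,\delta_l)$ and $\ul\gamma=(\gamma_1,\dots,\gamma_l)$ of the same height with $l\ge 1$. Since $\ul\delta^r=(\delta_l,\delta_{l-1},\dots,\delta_1)$, the defining composition factors as
\[
\epsilon_{\ul\delta^r}=\bigl(\epsilon_{\delta_l}\circ\dots\circ\epsilon_{\delta_2}\bigr)\circ\epsilon_{\delta_1}=\epsilon_{(\ul\delta^{(1)})^r}\circ\epsilon_{\delta_1},
\]
using that $(\delta_l,\dots,\delta_2)$ is the reverse of $\ul\delta^{(1)}=(\delta_2,\dots,\delta_l)$. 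Applying this to $\ul\gamma$ and inserting the defining formula
\[
\epsilon_{\delta_1}(\ul\gamma)=\sum_{i:\ \gamma_i=\delta_1}[\lambda-\gamma_{i+1}-\dots-\gamma_l,\delta_1^\vee]_{\delta_1}\,\ul\gamma^{(i)},
\]
the linearity of $\epsilon_{(\ul\delta^{(1)})^r}$ gives
\[
\tilde a_{\ul\delta,\ul\gamma}(\lambda)=\sum_{i:\ \gamma_i=\delta_1}[\lambda-\gamma_{i+1}-\dots-\gamma_l,\delta_1^\vee]_{\delta_1}\,\epsilon_{(\ul\delta^{(1)})^r}\bigl(\ul\gamma^{(i)}\bigr).
\]
Now for each index $i$ occurring in the sum we have $\gamma_i=\delta_1$, hence $\height(\ul\gamma^{(i)})=\height(\ul\gamma)-\gamma_i=\height(\ul\delta)-\delta_1=\height(\ul\delta^{(1)})$; therefore $\epsilon_{(\ul\delta^{(1)})^r}(\ul\gamma^{(i)})=b_\lambda(\ul\delta^{(1)},\ul\gamma^{(i)})=\tilde a_{\ul\delta^{(1)},\ul\gamma^{(i)}}(\lambda)$, which is the asserted recursion, obtained by peeling off the first entry $\delta_1$ of $\ul\delta$.

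I do not expect any genuine obstacle; the one thing to get right is the composition/reversal convention. Concretely, in $\epsilon_{\ul\delta^r}=\epsilon_{\delta_l}\circ\dots\circ\epsilon_{\delta_1}$ the operator applied to $\ul\gamma$ \emph{first} is the innermost $\epsilon_{\delta_1}$, and what remains, $\epsilon_{\delta_l}\circ\dots\circ\epsilon_{\delta_2}$, is again the $\epsilon$-operator of a \emph{reversed} path, namely $(\ul\delta^{(1)})^r$; it is this that turns the right-hand side into a sum of $\tilde a$'s attached to strictly shorter paths. I would also note that running the recursion of~(3) downward from the base case~(1) shows, by induction on $l(\ul\delta)$, that each $\tilde a_{\ul\delta,\ul\gamma}$ arises from the quantized linear functions $[f(\cdot)]_\alpha$ by finitely many products and $\SZ$-linear combinations --- that is, it is a ``quantum polynomial function'' in the sense of the next subsection, which is precisely the structural fact that the rest of the section relies on.
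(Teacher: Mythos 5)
Your proof is correct, and since the paper declares this lemma ``immediate'' with no proof given, your direct verification from the definitions is exactly what was intended. Parts (1) and (2) are handled appropriately, and the derivation of the recursion in (3) --- peeling $\epsilon_{\delta_1}$ off the inner end of $\epsilon_{\ul\delta^r}=\epsilon_{\delta_l}\circ\cdots\circ\epsilon_{\delta_1}$ and recognizing the remainder as $\epsilon_{(\ul\delta^{(1)})^r}$ --- is the right bookkeeping. One remark: the formula you arrive at, with summation condition $\gamma_i=\delta_1$, coefficient $[\,\cdot\,,\delta_1^\vee]_{\delta_1}$, and recursion to $\tilde a_{\ul\delta^{(1)},\ul\gamma^{(i)}}$, differs from what is printed in the paper, which writes $\delta_l=\gamma_i$ and $\ul\delta^{(l)}$ but keeps $\delta_1^\vee$ and the subscript $\delta_1$ in the coefficient. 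The printed version is internally inconsistent (the index being removed from $\ul\delta$ should match the simple root appearing in the quantum bracket), and your version is the self-consistent one that actually follows from the definition $b_\lambda(\ul\delta,\ul\gamma)=\epsilon_{\ul\delta^r}(\ul\gamma)$ with $\epsilon_{\ul\gamma}=\epsilon_{\gamma_1}\circ\cdots\circ\epsilon_{\gamma_l}$; so you have in effect corrected a typo rather than proved a different statement.
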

It follows from (1) and (3) by induction that  $\tilde a_{\ul\delta,\ul\gamma}\in\SZ[X]$. Now set
$$
a_{\ul\delta,\ul\gamma}:=\frac{1}{\ul\delta^{!}\ul\gamma^{!}}\tilde a_{\ul\delta,\ul\gamma}.
$$
Then $a_{\ul\delta,\ul\gamma}\in\SQ[X]$. For $\lambda\in X$ we have, however,
$a_{\ul\delta,\ul\gamma}(\lambda)=\frac{1}{\ul\delta^{!}\ul\gamma^{!}}b_\lambda(\ul\delta,\ul\gamma)=
b_\lambda(\lgl\ul\delta\rgl,\lgl\ul\gamma\rgl)\in\SZ$,
by Lemma \ref{lemma-bstab}, i.e. $a_{\ul\delta,\ul\gamma}$ is a function in $\SQ[X]$ with $a_{\ul\delta,\ul\gamma}(X)\subset\SZ$.

For $\nu\ge 0$ define the square matrix
$$
A_{\nu}:=\left(a_{\ul\delta\ul\gamma}\right)_{\height(\ul\delta)=\height(\ul\gamma)=\nu}
$$
with entries in $\SQ[X]$. For $\lambda\in X$ we denote by $A_\nu(\lambda)$ the matrix obtained by evaluating each entry at $\lambda$. So $A_\nu(\lambda)$ is a symmetric matrix with entries in $\SZ$. For $(\SK,q)$ as above we denote by $A_\mu(\lambda)_\SK$ the matrix with entries in $\SK$ obtained by base change $\SZ\to\SK$. It represents the restriction of the bilinear form $b_{\lambda}$ to the $\lambda-\nu$ weight space  of $P_\SK(\lambda)$ with respect to the basis $\{\lgl\ul\delta\rgl\}_{\height(\ul\delta)=\nu}$.

\begin{proposition} \label{prop-chars} For all $\lambda,\mu\in X$ we have
$$
\dim_{\SK} L_{\SK}(\lambda)_{\mu}=\rk _{\SK}\, A_{\lambda-\mu}(\lambda)_{\SK}.
$$
\end{proposition}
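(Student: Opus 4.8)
The plan is to trace through the chain of identifications established earlier in the paper and read off the dimension formula. First I would recall that, by Theorem \ref{thm-conrep}, $L_\SK(\lambda)\cong S_\SK(\lambda)$ as objects of $\CC_\SK$, and that $S_\SK(\lambda)$ was defined as the quotient $P_\SK(\lambda)/\rad_\SK(\lambda)$, where $\rad_\SK(\lambda)$ is the radical of the bilinear form $b_\lambda$ on $P_\SK(\lambda)$. Since the weight-space decomposition of $P_\SK(\lambda)$ is $b_\lambda$-orthogonal (part (2) of the Proposition on $b_\lambda$), the radical decomposes as $\rad_\SK(\lambda)=\bigoplus_\mu\rad_\SK(\lambda)_\mu$, and $\rad_\SK(\lambda)_\mu$ is precisely the radical of the restriction $b_\lambda|_{P_\SK(\lambda)_\mu}$. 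Hence
$$
\dim_\SK L_\SK(\lambda)_\mu=\dim_\SK S_\SK(\lambda)_\mu=\dim_\SK P_\SK(\lambda)_\mu-\dim_\SK\rad_\SK(\lambda)_\mu.
$$

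Next I would invoke the standard linear-algebra fact that for a symmetric bilinear form on a finite-dimensional vector space, the dimension of the radical equals the codimension of the rank of any Gram matrix: if $B$ is the Gram matrix of $b_\lambda|_{P_\SK(\lambda)_\mu}$ in some basis, then $\dim\rad = \dim P_\SK(\lambda)_\mu - \rk B$. The text has already identified such a Gram matrix: with respect to the basis $\{\lgl\ul\delta\rgl\}_{\height(\ul\delta)=\nu}$ of $P_\SK(\lambda)_\mu$, where $\nu=\lambda-\mu$, the matrix of $b_\lambda$ is exactly $A_\nu(\lambda)_\SK$ (this is the last sentence before the Proposition, and it rests on Lemma \ref{lemma-bstab} together with the definition $a_{\ul\delta,\ul\gamma}(\lambda)=b_\lambda(\lgl\ul\delta\rgl,\lgl\ul\gamma\rgl)$). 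Note that $P_\SK(\lambda)_\mu$ is nonzero only when $\mu\le\lambda$, in which case $\nu\ge 0$ and the finite set of paths of height $\nu$ indexes a basis, so everything is finite-dimensional and the rank statement makes sense; when $\mu\not\le\lambda$ both sides of the asserted equality are $0$.

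Combining these, $\dim_\SK L_\SK(\lambda)_\mu = \dim_\SK P_\SK(\lambda)_\mu - \dim_\SK\rad_\SK(\lambda)_\mu = \rk_\SK A_{\lambda-\mu}(\lambda)_\SK$, which is the claim. I do not anticipate a genuine obstacle here: the proof is a bookkeeping assembly of facts already proven. The only point requiring a small amount of care is confirming that base change commutes correctly — i.e. that $P_\SK(\lambda)_\mu = P_\SZ(\lambda)_\mu\otimes_\SZ\SK$ has the paths $\lgl\ul\delta\rgl$ as an $\SK$-basis (true since $P_\SZ(\lambda)_\mu$ is free over $\SZ$ on this basis) and that the Gram matrix of the base-changed form is the base-changed Gram matrix (immediate from bilinearity). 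Given those routine checks, the displayed equality follows at once.
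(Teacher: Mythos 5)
Your proof is correct and takes essentially the same route as the paper's: identify $L_\SK(\lambda)$ with $S_\SK(\lambda)=P_\SK(\lambda)/\rad_\SK(\lambda)$ via Theorem~\ref{thm-conrep}, then read off the dimension of each weight space of the quotient as the rank of the Gram matrix $A_{\lambda-\mu}(\lambda)_\SK$. The paper's proof is terser; your added checks (orthogonality of weight spaces, compatibility with base change, the trivial case $\mu\not\le\lambda$) simply make explicit what the paper leaves implicit.
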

\begin{proof} We have $\dim_\SK L_{\SK}(\lambda)_{\mu}=\dim_\SK  S_{\SK}(\lambda)_{\mu}$ by Theorem \ref{thm-conrep}. By definition,  $S_\SK(\lambda)_\mu=P_\SK(\lambda)_\mu/\rad_\SK(\lambda)_\mu$ and hence  $\dim_\SK  S_{\SK}(\lambda)_{\mu}=\rk _{\SK}\, A_{\lambda-\mu}(\lambda)_{\SK}$. 
\end{proof}

\subsection{Cyclotomic polynomials}
For $l\ge 1$ denote by $\sigma_l\in\DZ[v]$ the $l$-th cyclotomic polynomial (so $\sigma_1=v-1$, $\sigma_2=v+1$, $\sigma_3=v^2+v+1$,...). Then $v^n-1=\prod_{l\ge1, l|n}\sigma_l$.
So
\begin{align*}
[n]&=\frac{v^{n}-v^{-n}}{v-v^{-1}}=v^{-n+1}\frac{v^{2n}-1}{v^{2}-1}\\
&=v^{-n+1}\prod_{l\ge 3,l|2n}\sigma_l.
\end{align*}

\begin{lemma} \label{lemma-per1} Let $F$ be an element in $\SZ[X]$. Then  $F(\lambda+l\gamma)\equiv F(\lambda)\mod  \sigma_l$   for all $\lambda,\gamma\in X$.
\end{lemma}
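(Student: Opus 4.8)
The plan is to reduce the claim to the generators of the algebra $\SZ[X]$ and then prove the congruence for those generators. Since $\SZ[X]$ is generated as a $\SZ$-algebra by the functions $[f(\cdot)]_\alpha$ for group homomorphisms $f\colon X\to\DZ$ and $\alpha\in\Pi$, and since the map $F\mapsto \bigl(F(\lambda+l\gamma)-F(\lambda)\bigr)$ is additive while the set of $F$ satisfying $F(\lambda+l\gamma)\equiv F(\lambda)\bmod\sigma_l$ (for all $\lambda,\gamma$) is closed under addition and multiplication — here one uses that $\sigma_l$ generates an ideal in $\SZ$, and that $xy-x'y' = (x-x')y + x'(y-y')$ — it suffices to check the congruence on a generating set. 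So first I would observe that the collection of such $F$ forms a $\SZ$-subalgebra of $\SZ[X]$, hence it is enough to treat $F = [f(\cdot)]_\alpha$.

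Next I would compute directly with $F(\lambda) = [f(\lambda)]_\alpha = \dfrac{v_\alpha^{f(\lambda)} - v_\alpha^{-f(\lambda)}}{v_\alpha - v_\alpha^{-1}}$, where $v_\alpha = v^{d_\alpha}$. Writing $n = f(\lambda)$ and $n' = f(\lambda + l\gamma) = n + l\,f(\gamma)$, the difference $F(\lambda+l\gamma) - F(\lambda)$ equals $\dfrac{v_\alpha^{n'} - v_\alpha^{n} - v_\alpha^{-n'} + v_\alpha^{-n}}{v_\alpha - v_\alpha^{-1}}$. The key arithmetic input is that $v^l - 1 = \prod_{m\ge 1,\, m\mid l}\sigma_m$ is divisible by $\sigma_l$ in $\SZ$, and more generally $v^{lk} - 1$ is divisible by $v^l - 1$ hence by $\sigma_l$ for every $k\in\DZ$; equivalently $v^{lk}\equiv 1\bmod\sigma_l$ for all $k\in\DZ$ (using invertibility of $v$ for negative $k$). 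Since $v_\alpha^{\,l\cdot j} = v^{d_\alpha l j}$ and $d_\alpha l j$ is a multiple of $l$, we get $v_\alpha^{l f(\gamma)} \equiv 1 \bmod\sigma_l$, so $v_\alpha^{n'} = v_\alpha^{n}\cdot v_\alpha^{l f(\gamma)} \equiv v_\alpha^{n}\bmod\sigma_l$ and likewise $v_\alpha^{-n'}\equiv v_\alpha^{-n}\bmod\sigma_l$. Hence the numerator $v_\alpha^{n'} - v_\alpha^{n} - v_\alpha^{-n'} + v_\alpha^{-n}$ lies in $\sigma_l\SZ$.

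The one genuine subtlety is dividing by $v_\alpha - v_\alpha^{-1}$: I must check that the quotient $F(\lambda+l\gamma) - F(\lambda)$, which a priori lives in $\SZ$ (each $[n]_\alpha$ is in $\SZ$ by the defining formula), is still divisible by $\sigma_l$ in $\SZ$, not merely that its product with $v_\alpha - v_\alpha^{-1}$ is. This amounts to the statement that $\sigma_l$ is coprime to $v_\alpha - v_\alpha^{-1} = v^{-d_\alpha}(v^{2d_\alpha}-1)$ in $\SZ$, except possibly for the cyclotomic factors of $v^{2d_\alpha}-1$; since $d_\alpha\in\{1,2,3\}$ the polynomial $v^{2d_\alpha}-1$ is a product of $\sigma_m$ with $m\mid 2d_\alpha \le 6$, so $\sigma_l$ divides it only when $l\mid 2d_\alpha$. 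I expect this edge case to be the main obstacle, and the clean way around it is not to divide: instead of manipulating $F(\lambda+l\gamma)-F(\lambda)$ as a single element, write $[n]_\alpha = v_\alpha^{n-1} + v_\alpha^{n-3} + \dots + v_\alpha^{-n+1}$ (the explicit sum of $|n|$ monomials, with the appropriate sign for $n<0$) directly from the definition, so that $F(\lambda) = [f(\lambda)]_\alpha$ is visibly a $\DZ$-linear combination of powers $v^{d_\alpha j}$. Then $F(\lambda+l\gamma) - F(\lambda)$ is a $\DZ$-linear combination of terms $v^{d_\alpha(j + lf(\gamma))} - v^{d_\alpha j} = v^{d_\alpha j}(v^{d_\alpha l f(\gamma)} - 1)$, each of which is divisible by $v^{d_\alpha l f(\gamma)} - 1$ and hence by $v^l - 1$ and hence by $\sigma_l$ in $\SZ$, with the bookkeeping that the number of summands in $[n']_\alpha$ versus $[n]_\alpha$ differs by $|d_\alpha l f(\gamma)|$ terms which pair up telescopically. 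This avoids any division and gives the congruence cleanly.

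Finally I would assemble: the subalgebra of $F$ satisfying the desired congruence contains all $[f(\cdot)]_\alpha$ and is a $\SZ$-subalgebra, so it is all of $\SZ[X]$, which is exactly the statement of the lemma.
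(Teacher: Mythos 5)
You correctly identify the subtlety that the paper's own proof elides: the congruence between numerators does not automatically descend through division by $v_\alpha - v_\alpha^{-1}$, and the problematic case is precisely $l \mid 2d_\alpha$, so that $\sigma_l$ divides $v^{2d_\alpha}-1$ and hence, up to the unit $v^{-d_\alpha}$, divides $v_\alpha - v_\alpha^{-1}$. When $l \nmid 2d_\alpha$ for every $\alpha$ the division is harmless: $\sigma_l$ is irreducible in $\DZ[v]$, so $\SZ/(\sigma_l)$ is an integral domain, and the class of $v_\alpha - v_\alpha^{-1}$ in it is nonzero and hence cancellable. Up to that point your argument is sound and makes explicit what the paper's one-line proof takes for granted.

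The ``division-free'' repair you propose for the remaining case does not work, however. Write $n = f(\lambda)$, $k = f(\gamma)$, $n' = n + lk$, and suppose for concreteness $n, k > 0$. Pairing each monomial $v_\alpha^j$ in $[n]_\alpha$ with $v_\alpha^{j+lk}$ accounts for $n$ of the $n'$ monomials of $[n']_\alpha$; the $lk$ monomials left over (those with exponents $lk-n-1,\ lk-n-3,\ \dots,\ 1-n-lk$) do \emph{not} telescope --- they all appear with the same sign --- and they sum to $v_\alpha^{-n}[lk]_\alpha$, which need not be divisible by $\sigma_l$. For $d_\alpha = 1$, $l = 2$, $k = 1$, $n = 0$ one gets $[n']_\alpha - [n]_\alpha = [2]_\alpha = v + v^{-1} \equiv -2 \pmod{\sigma_2 = v+1}$, nonzero in $\SZ/(\sigma_2) \cong \DZ$. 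So the lemma as literally stated fails whenever $l \mid 2d_\alpha$ for some $\alpha$ (in particular for $l = 1, 2$), and no amount of monomial bookkeeping can save it; the correct statement needs the hypothesis $l \nmid 2d_\alpha$ for all $\alpha\in\Pi$, under which it reduces to the domain-cancellation argument above.
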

\begin{proof} 
It is sufficient to show the statement for  all $F$ of the form $[f(\cdot)]_\alpha$ for some group homomorphism $f\colon X\to\DZ$ and some $\alpha\in\Pi$. For these, we calculate as follows.  
\begin{align*}
[f(\lambda+l\gamma)]_\alpha&=[f(\lambda)+lf(\gamma)]_\alpha=\frac{v_\alpha^{f(\lambda)+lf(\gamma)}-v_\alpha^{-f(\lambda)-lf(\gamma)}}{v_\alpha-v_\alpha^{-1}}\\
&=\frac{v_\alpha^{f(\lambda)}(v_\alpha^l)^{f(\gamma)}-v_\alpha^{-f(\lambda)}(v_\alpha^{-l})^{f(\gamma)}}{v_\alpha-v_\alpha^{-1}}\\
&\equiv \frac{v_\alpha^{f(\lambda)}-v_\alpha^{-f(\lambda)}}{v_\alpha-v_\alpha^{-1}}=[f(\lambda)]_\alpha \mod\sigma_l
\end{align*}
as $v_\alpha^l=v^{d_\alpha l}=1\mod\sigma_l$ for all $\alpha\in\Pi$.
 \end{proof}

For $\nu\ge 0$ define $c_\alpha(\nu)\in \DZ_{\ge 0}$ by $\nu=\sum_{\alpha\in\Pi}c_\alpha(\nu)\alpha$.

\begin{lemma} \label{lemma-permat} Let $\nu\ge 0$ and $l\in\DN$ be such that $l>c_{\alpha}(\nu)$ for all $\alpha\in\Pi$. Then 
$$
A_\nu(\lambda+l\gamma)\equiv A_\nu(\lambda)\mod \sigma_l
$$
for all $\lambda,\gamma\in X$. 
\end{lemma}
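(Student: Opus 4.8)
The plan is to reduce the matrix statement to the scalar statement of Lemma \ref{lemma-per1}, entry by entry. Recall that $A_\nu = (a_{\ul\delta,\ul\gamma})_{\height(\ul\delta)=\height(\ul\gamma)=\nu}$ with $a_{\ul\delta,\ul\gamma} = \frac{1}{\ul\delta^!\ul\gamma^!}\tilde a_{\ul\delta,\ul\gamma}$, where $\tilde a_{\ul\delta,\ul\gamma}\in\SZ[X]$. Applying Lemma \ref{lemma-per1} to the element $\tilde a_{\ul\delta,\ul\gamma}$ gives $\tilde a_{\ul\delta,\ul\gamma}(\lambda+l\gamma)\equiv\tilde a_{\ul\delta,\ul\gamma}(\lambda)\bmod\sigma_l$ in $\SZ$, i.e. $\tilde a_{\ul\delta,\ul\gamma}(\lambda+l\gamma)-\tilde a_{\ul\delta,\ul\gamma}(\lambda)\in\sigma_l\SZ$. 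The issue is that we then need to divide by $\ul\delta^!\ul\gamma^!\in\SZ\setminus\{0\}$ and still land in $\sigma_l\SZ$ (not merely in $\sigma_l\SQ$). Since we already know $a_{\ul\delta,\ul\gamma}(X)\subset\SZ$, both $a_{\ul\delta,\ul\gamma}(\lambda+l\gamma)$ and $a_{\ul\delta,\ul\gamma}(\lambda)$ are genuine elements of $\SZ$, so their difference lies in $\SZ$; what remains is to show this difference is divisible by $\sigma_l$ in $\SZ$.

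The key arithmetic point, and the main obstacle, is the following: the difference $\tilde a_{\ul\delta,\ul\gamma}(\lambda+l\gamma)-\tilde a_{\ul\delta,\ul\gamma}(\lambda)$ is divisible by $\sigma_l$ in $\SZ$, and we want to conclude that $a_{\ul\delta,\ul\gamma}(\lambda+l\gamma)-a_{\ul\delta,\ul\gamma}(\lambda)$ is divisible by $\sigma_l$ in $\SZ$ as well, after dividing by $\ul\delta^!\ul\gamma^!$. For this I would argue that $\ul\delta^!$ is a product of factors $[s]^!_\alpha$ with $s\le c_\alpha(\nu)<l$; by the cyclotomic factorization $[n] = v^{-n+1}\prod_{l'\ge 3,\, l'\mid 2n}\sigma_{l'}$ recalled just before Lemma \ref{lemma-per1}, and the analogous formula for $[n]_\alpha=[n](v_\alpha)$, every factor $[s]_\alpha$ with $1\le s\le c_\alpha(\nu)$ is a unit times a product of cyclotomic polynomials $\sigma_{l'}(v^{d_\alpha})$ with $l'\mid 2s$, hence $l'\le 2s<2l$; one checks $\sigma_{l'}(v^{d_\alpha})$ is coprime to $\sigma_l(v)$ in $\SQ[v]$ — here one uses that $\sigma_l$ is the minimal polynomial of a primitive $l$-th root of unity and the roots of $\sigma_{l'}(v^{d_\alpha})$ are $d_\alpha$-th roots of primitive $l'$-th roots of unity, none of which can be a primitive $l$-th root of unity when $l'<l$ unless forced by the $d_\alpha$-ambiguity — the cleanest route is to work modulo $\sigma_l$ and observe that $\ul\delta^!$ maps to a nonzerodivisor (indeed a unit after inverting $v$) in $\SZ/\sigma_l\SZ$. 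Then in the ring $\SZ/\sigma_l\SZ$ the equation $\ul\delta^!\ul\gamma^!\, a_{\ul\delta,\ul\gamma} = \tilde a_{\ul\delta,\ul\gamma}$ can be solved for $a_{\ul\delta,\ul\gamma}$ uniquely, and periodicity of $\tilde a_{\ul\delta,\ul\gamma}$ forces periodicity of $a_{\ul\delta,\ul\gamma}$ modulo $\sigma_l$.

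Concretely I would proceed as follows. First, fix $\ul\delta,\ul\gamma$ with $\height(\ul\delta)=\height(\ul\gamma)=\nu$ and set $D:=\ul\delta^!\ul\gamma^!$. Second, show $D$ is invertible in $\SZ[v^{-1}]/\sigma_l\SZ[v^{-1}]$ — equivalently that $\gcd(D,\sigma_l)=1$ in $\DQ[v]$ — using that $D$ is a product of $[s]_\alpha$'s with $1\le s< l$ and that for such $s$ the polynomial $[s]_\alpha(v)=[s](v^{d_\alpha})$ has all its complex roots among $d_\alpha s'$-th roots of unity for $s'\mid s$, while $\sigma_l$ has roots exactly the primitive $l$-th roots of unity; here one must rule out a primitive $l$-th root $\zeta$ satisfying $\zeta^{d_\alpha m}=1$ for some $m<l$ with $\zeta^{2s}=1$ — wait, more carefully: the relevant constraint is $\sigma_l(q)=0$ with $l$ strictly larger than the coefficients $c_\alpha(\nu)$, hence larger than every $s$ occurring, and one checks directly that $[s]_\alpha$ evaluated at a primitive $l$-th root of unity is nonzero because the numerator $v_\alpha^s-v_\alpha^{-s}$ is nonzero there. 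Third, from $\tilde a_{\ul\delta,\ul\gamma}(\lambda+l\gamma)\equiv\tilde a_{\ul\delta,\ul\gamma}(\lambda)\bmod\sigma_l$ (Lemma \ref{lemma-per1}) and $D\, a_{\ul\delta,\ul\gamma}(\lambda)=\tilde a_{\ul\delta,\ul\gamma}(\lambda)$, $D\, a_{\ul\delta,\ul\gamma}(\lambda+l\gamma)=\tilde a_{\ul\delta,\ul\gamma}(\lambda+l\gamma)$ in $\SZ$, conclude $D\cdot\big(a_{\ul\delta,\ul\gamma}(\lambda+l\gamma)-a_{\ul\delta,\ul\gamma}(\lambda)\big)\in\sigma_l\SZ$; since $D$ is a nonzerodivisor modulo $\sigma_l$ and $\SZ/\sigma_l\SZ$ embeds in $\DQ[v]/\sigma_l$, invertibility of $D$ there gives $a_{\ul\delta,\ul\gamma}(\lambda+l\gamma)-a_{\ul\delta,\ul\gamma}(\lambda)\in\sigma_l\SZ$. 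Finally, assembling over all pairs $(\ul\delta,\ul\gamma)$ yields $A_\nu(\lambda+l\gamma)\equiv A_\nu(\lambda)\bmod\sigma_l$ entrywise, which is the assertion. The main obstacle is the coprimality/invertibility bookkeeping in the second step; everything else is a direct application of Lemma \ref{lemma-per1} and the definitions.
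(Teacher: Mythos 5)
Your argument follows the paper's proof step by step: reduce to a single matrix entry, write $\tilde a_{\ul\delta,\ul\gamma} = D\, a_{\ul\delta,\ul\gamma}$ with $D=\ul\delta^!\ul\gamma^!$, apply Lemma~\ref{lemma-per1} to $\tilde a_{\ul\delta,\ul\gamma}\in\SZ[X]$, and then cancel $D$ on the grounds that $\sigma_l\nmid D$; that cancellation (together with the primality of $\sigma_l$ in $\SZ$) is exactly what the paper does, so in structure the two proofs coincide. But your justification for $\sigma_l\nmid D$ contains a false step: you assert that ``$[s]_\alpha$ evaluated at a primitive $l$-th root of unity is nonzero because the numerator $v_\alpha^s-v_\alpha^{-s}$ is nonzero there.'' Take $d_\alpha=1$, $l=4$, $s=2$: the numerator $v^2-v^{-2}$ vanishes at $v=i$, and in fact $[2]=v^{-1}\sigma_4$ in $\SZ$, so $\sigma_4\mid [2]_\alpha=[2]$ even though $s=2<l=4$. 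More generally $\sigma_l\mid[s]_\alpha$ precisely when $l\mid 2d_\alpha s$ and $l\nmid 2d_\alpha$, and the first condition can hold with $s<l$ as soon as $\gcd(l,2d_\alpha)>1$. So the hypothesis $l>c_\alpha(\nu)$ alone does not force $\sigma_l\nmid D$.

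You should be aware that the paper's own proof rests on the identical unsupported assertion (``That implies that $\sigma_l$ divides neither $\ul\delta^!$ nor $\ul\gamma^!$''), which fails in the same regime, so you have reproduced the argument faithfully including its weak point; your own hedging (``wait, more carefully\dots'') shows you sensed the difficulty. The cancellation is legitimate whenever $\gcd(l,2d_\alpha)=1$ for every $\alpha\in\Pi$: then $l\mid 2d_\alpha s$ forces $l\mid s$, hence $s\ge l>c_\alpha(\nu)$, contradicting $s\le c_\alpha(\nu)$. But for $l$ sharing a factor with some $2d_\alpha$ (e.g.\ $l$ even, or $l$ divisible by $3$ with a long root present), one needs either a strengthened numerical bound such as $l/\gcd(l,2d_\alpha)>c_\alpha(\nu)$ for all $\alpha$, or a genuinely different argument; the step as you have written it, and as the paper has written it, does not close.
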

Let $(\SK,q)$ be as before. If $\sigma_l(q)=0$, then the above Lemma means $A_\nu(\lambda+l\gamma)_\SK= A_\nu(\lambda)_\SK$.
\begin{proof} Let $s\in\SQ[X]$ be the entry in $A_\nu$ at row $\ul\delta$ and column $\ul\gamma$. Then $\tilde s:=\ul\delta^!\ul\gamma^! s\in\SZ[X]$. Lemma \ref{lemma-per1} implies that $\tilde s(\lambda+l\gamma)- \tilde s(\lambda)=\ul\delta^!\ul\gamma^!(s(\lambda+l\gamma)-s(\lambda))$ is divisible by $\sigma_l$ for all $\lambda,\gamma\in X$.  Note that the height of $\ul\delta$ and $\ul\gamma$ is $\nu$, so  our assumption on $l$ implies that each simple root $\alpha$ occurs in either path at most $l-1$ times. That implies that $\sigma_l$ divides neither $\ul\delta^!$ nor $\ul\gamma^!$.  So we deduce that $s(\lambda+l\gamma)-s(\lambda)$ is divisible by $\sigma_l$. 
\end{proof}

Let $(\SK,q)$ be as before. 
Proposition \ref{prop-chars} and  Lemma \ref{lemma-permat} immediately imply the following, which is the main result of this article.
\begin{theorem}\label{thm-periodicity} Suppose that  $\lambda,\mu\in X$ and $l\in\DN$ are such that $\mu\le \lambda$ and  $l>c_{\alpha}(\lambda-\mu)$ for all $\alpha\in\Pi$. Suppose furthermore that $\sigma_l(q)=0$. Then
$$
\dim_{\SK} L_{\SK}(\lambda)_{\mu}=\dim_{\SK} L_{\SK}(\lambda+l\nu)_{\mu+l\nu}
$$
for all $\nu\in X$. 
\end{theorem}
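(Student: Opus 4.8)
The plan is to read the statement off from Proposition \ref{prop-chars} and Lemma \ref{lemma-permat} with essentially no extra work. Fix $\nu \in X$ and abbreviate $\eta := \lambda - \mu$; by hypothesis $\eta \ge 0$ and $l > c_\alpha(\eta)$ for every $\alpha \in \Pi$. The first remark is that shifting both weights by $l\nu$ does not alter their difference, since $(\lambda + l\nu) - (\mu + l\nu) = \eta$; in particular $\mu + l\nu \le \lambda + l\nu$, and the bound $l > c_\alpha(\eta)$ remains in force for the shifted pair $(\lambda + l\nu,\, \mu + l\nu)$.

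Next I would apply Proposition \ref{prop-chars} once at $\lambda$ and once at $\lambda + l\nu$, turning both dimensions into ranks of matrices over $\SK$:
$$
\dim_\SK L_\SK(\lambda)_\mu = \rk_\SK A_\eta(\lambda)_\SK
\qquad\text{and}\qquad
\dim_\SK L_\SK(\lambda + l\nu)_{\mu + l\nu} = \rk_\SK A_\eta(\lambda + l\nu)_\SK .
$$
Thus it suffices to prove that $\rk_\SK A_\eta(\lambda)_\SK = \rk_\SK A_\eta(\lambda + l\nu)_\SK$, and both are square matrices of the same size (indexed by the paths of height $\eta$). For the equality of ranks I would invoke Lemma \ref{lemma-permat}, with its free shift variable specialized to $\nu$: since $l > c_\alpha(\eta)$ for all $\alpha \in \Pi$, every entry of the integral matrix $A_\eta(\lambda + l\nu) - A_\eta(\lambda)$ is divisible by $\sigma_l$ in $\SZ$. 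Performing the base change $\SZ \to \SK$, under which $v \mapsto q$ and hence $\sigma_l \mapsto \sigma_l(q) = 0$, these entries become $0$, so $A_\eta(\lambda + l\nu)_\SK = A_\eta(\lambda)_\SK$ as matrices over $\SK$. A fortiori their ranks coincide, and comparing with the displayed identities finishes the proof.

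I do not anticipate any obstacle of substance here: the entire content has been deposited upstream, in Proposition \ref{prop-chars} (which identifies $\dim L_\SK(\lambda)_\mu$ with the rank of the Gram matrix of $b_\lambda$ on the root-path basis) and in Lemma \ref{lemma-permat} (the $\sigma_l$-periodicity of quantum polynomial functions, together with the fact that once $l$ exceeds every coefficient of $\eta$ the polynomial $\sigma_l$ divides none of the denominators $\ul\delta^!$). The only thing demanding a little care is notational: in the theorem the roaming parameter is $\nu$, whereas in Lemma \ref{lemma-permat} the symbol $\nu$ denotes the fixed height and $\gamma$ is the shift; the abbreviation $\eta := \lambda - \mu$ above keeps these apart. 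One should also spell out the elementary step that a congruence modulo $\sigma_l(v)$ over $\SZ$ becomes an equality over $\SK$ precisely because $q$ is a root of $\sigma_l$.
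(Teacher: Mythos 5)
Your proposal is correct and follows exactly the paper's intended route: the paper simply states that Proposition \ref{prop-chars} and Lemma \ref{lemma-permat} ``immediately imply'' the theorem, and your write-up is a careful unpacking of that implication (two applications of Proposition \ref{prop-chars}, Lemma \ref{lemma-permat} with shift $\nu$, and the observation that the $\sigma_l$-congruence over $\SZ$ becomes an equality over $\SK$). Nothing further is needed.
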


\section{Appendix}
This section contains a lengthy calculation that is used in the main body of the article.
Let $M=\bigoplus_{l\in\DZ}M_l$ be a $\DZ$-graded $\SQ=\DQ(v)$-module. Let $e,f\colon M\to M$ be $\SQ$-linear endomorphisms of degree $+2$ and $-2$, resp. Suppose that  for all  $l\in \DZ$ we have $(ef-fe)|_{M_{l}}=[l]\cdot\id_{M_{l}}=\frac{v^{l}-v^{-l}}{v-v^{-1}}\id_{M_l}$.

\begin{lemma}\label{lemma-comrelgen}  For all $m,n>0$ and $l\in \DZ$ we have
$$
e^{m}f^{n}|_{M_{l}}=\sum_{r=0}^{\min(m,n)}\frac{[m]^![n]^!}{[m-r]^![n-r]^!} \qchoose{l+m-n}{r}f^{n-r}e^{m-r}|_{M_{l}}.
$$
\end{lemma}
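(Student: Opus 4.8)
The plan is to prove the identity by induction, treating the "level" $l$ as a formal parameter and the exponents $m,n$ as the induction variables. The natural base case is $n=1$ (with $m\geq 1$ arbitrary) — or even $m=n=1$, which is exactly the hypothesis $(ef-fe)|_{M_l}=[l]\,\id_{M_l}$, rewritten as $ef|_{M_l}=fe|_{M_l}+\qchoose{l}{1}\id_{M_l}$ since $\qchoose{l}{1}=[l]$. The generic inductive step is to assume the formula for $(m,n-1)$ (and for all levels, since applying $e$ or $f$ shifts the degree), and to derive it for $(m,n)$ by writing $e^m f^n = (e^m f^{n-1}) f$ and substituting. The only nontrivial operator identity needed beyond bookkeeping is the commutation of $e^{m-r}$ past a single $f$, i.e. a special case of the lemma itself for $n=1$; this I would establish first by a short separate induction on $m$, using $e^m f = e^{m-1}(ef) = e^{m-1}(fe + [\,\cdot\,]) $ and tracking the degree shift so that the scalar $[l]$ becomes $[l+2(m-1)]$ or similar (the exact shift being dictated by the degree-$+2$ convention for $e$).

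First I would record the single-$f$ case: $e^m f|_{M_l} = f e^m|_{M_l} + [m]\,[l+m-1]_{?}\,e^{m-1}|_{M_l}$ — more precisely, in the $v$-integer normalization one gets $e^m f|_{M_l} = f e^m|_{M_l} + [m]\cdot[\text{something linear in }l,m]\cdot e^{m-1}|_{M_l}$, which after dividing by $[m]^!$ is the $n=1$ instance of the claimed formula. Then, with that in hand, I would do the main induction: compute $e^m f^n|_{M_l} = (e^m f^{n-1})(f|_{M_l})$, apply the induction hypothesis to $e^m f^{n-1}$ acting on $M_{l-2}$ (the degree of $f(M_l)$), obtaining a sum $\sum_r \frac{[m]^![n-1]^!}{[m-r]^![n-1-r]^!}\qchoose{l-2+m-n+1}{r} f^{n-1-r} e^{m-r} f|_{M_l}$, then push the trailing $f$ leftward through $e^{m-r}$ using the single-$f$ case. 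Collecting terms and reindexing the resulting double sum, one must match it against $\sum_r \frac{[m]^![n]^!}{[m-r]^![n-r]^!}\qchoose{l+m-n}{r} f^{n-r}e^{m-r}|_{M_l}$.

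The main obstacle will be the final combinatorial identity among the $v$-binomial coefficients $\qchoose{\cdot}{\cdot}$ that must hold for the reindexed double sum to collapse to the single sum on the right-hand side: this is a quantum analogue of a Vandermonde/Pascal-type relation, of the shape $\qchoose{l+m-n}{r} = \sum (\text{power of }v)\,\qchoose{l+m-n+1}{r-1}\,[\,\cdot\,] + \qchoose{l+m-n+1}{r}$, possibly with a $v$-power twist coming from the degree shifts. I would verify this by the standard recursion $\qchoose{a}{r} = v^{?}\qchoose{a-1}{r} + v^{?}\qchoose{a-1}{r-1}$ (the two forms of the quantum Pascal rule), being careful that the whole calculation takes place with $v$ replaced by $v_\alpha$ in the application (as the lemma statement notes), so every identity used must be an identity of Laurent polynomials in a single variable valid over $\DZ[v,v^{-1}]$, hence robust under $v\mapsto v_\alpha$. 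Everything else — the degree tracking, the factorial normalizations, rewriting $[l]=\qchoose{l}{1}$ — is routine bookkeeping that I would not spell out in full.
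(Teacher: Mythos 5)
Your plan is essentially the paper's proof with the roles of $m$ and $n$ interchanged: the paper first settles $m=1$ (one $e$, many $f$'s) by induction on $n$, then inducts on $m$ pushing the outer $e$ through $f^{n-r}$, whereas you first settle $n=1$ (many $e$'s, one $f$) by induction on $m$, then induct on $n$ pushing the trailing $f$ leftward through $e^{m-r}$; both routes collapse via the same quantum-binomial identity (the paper's Lemma \ref{lemma-easypeasy}, $[a]\qchoose{b}{c}=[a-c]\qchoose{b-1}{c}+[a+b-c]\qchoose{b-1}{c-1}$, in your case with $a=n$, $b=l+m-n$, $c=s$). The structure is sound and the steps you leave as "routine bookkeeping" do in fact work out.
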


\begin{proof} We first prove this in the case $m=1$. Then the claim reads
$$
ef^n(v)=f^ne(v)+[n] [ l+1-n]f^{n-1}(v)
$$
for all $v\in M_{ l}$. For $n=1$ this is the assumed commutation relation between the operators $e$ and $f$. So suppose the claim holds for some $n$.  Let $v\in M_{ l}$. Then
\begin{align*}
 ef^{n+1}(v)&=(ef^n)f(v)\\
 &=f^nef(v)+[n][ l-2+1-n]f^{n}(v)\quad\text{ (as $f(v)\in M_{ l-2}$)}\\
 &=f^{n+1}e(v)+[l]f^{n}(v) +[n][ l-1-n]f^{n}(v)\quad\text{ (as $[e,f](v)=[l]v$)},\\
\end{align*}
and the claim boils down to showing that
$$
[ l]+[n][ l-1-n]= [n+1][ l-n],
$$
which follows from Lemma \ref{lemma-easypeasy} (set $a=n+1$,  $b= l-n$, $c=1$). 
 So we have proven the claim in the case $m=1$ and arbitrary $n>0$. 

Now we fix $n$ and prove the formula  by induction on $m$. For $m=1$ this is settled already.  So suppose the above equation holds for some $m$. Let $v\in M_{ l}$. 
We calculate, using the induction hypothesis and setting $\zeta:= l+m-n$,
\begin{align*}
e^{m+1}f^n(v)&=e(e^{m}f^n)(v)\\
&=e(\sum_{r=0}^{\min(m,n)}\frac{[m]^![n]^!}{[m-r]^![n-r]^!}\qchoose{\zeta}{r}f^{n-r}e^{m-r}(v))\\
&=\sum_{r=0}^{\min(m,n)}\frac{[m]^![n]^!}{[m-r]^![n-r]^!}\qchoose{\zeta}{r}ef^{n-r}e^{m-r}(v).
\end{align*}
Now fix, for a moment, a number $r$ with $0\le r\le \min(m,n)$. Set $w_r:=e^{m-r}(v)$. This is an element in $M_{ l+2(m-r)}$. Then the already proven case $m=1$ yields
\begin{align*}
ef^{n-r}(w_r)&=f^{n-r}e(w_r)+[n-r][ l+2(m-r)+1-(n-r)]f^{n-r-1}(w_r)\\
&= f^{n-r}e(w_r)+[n-r][\zeta+m-r+1]f^{n-r-1}(w_r).
\end{align*}
We obtain $e^{m+1}f^n(v)=\sum_{0\le s\le\min(m+1,n)} c_s f^{n-s}e^{m+1-s}(v)$ with
\begin{align*}
c_s&=\frac{[m]^![n]^!}{[m-s]^![n-s]^!}\qchoose{\zeta}{s}+\\&\quad+\frac{[m]^![n]^![n-(s-1)][\zeta+m-(s-1)+1]}{[m-(s-1)]^![n-(s-1)]^!}\qchoose{\zeta}{s-1}
\end{align*}
for $0\le s\le \min(m,n)$ and, in the case  $n\ge m+1$, 
\begin{align*}
c_{m+1}&=\frac{[m]^![n]^![n-m][\zeta+1]}{[n-m]^!}\qchoose{\zeta}{m} \\
&= \frac{[m+1]^![n]^!}{[n-(m+1)]^!}\qchoose{\zeta+1}{m+1}.
\end{align*}
For $c_{m+1}$ we immediately see that this equals the coefficient of $f^{n-(m+1)}$ on the left hand side of the equation that we want to prove.

 We now fix $s$ with $0\le s\le\min(m,n)$ and write $c_s=\frac{[m]^![n]^!}{[m-s]^![n-s]^!} d_s$ with
\begin{align*}
d_s&=\qchoose{\zeta}{ s}+\frac{[n-(s-1)][\zeta+m-s+2]}{[m-s+1][n-s+1]}\qchoose{\zeta}{ s-1}\\
&=\qchoose{\zeta}{ s}+\frac{[\zeta+m-s+2]}{[m-s+1]}\qchoose{\zeta}{ s-1}.
\end{align*}

Now the claim is equivalent to showing that  $c_s=\frac{[m+1]^![n]^!}{[m+1-s]^![n-s]^!}\	\qchoose{\zeta+1}{ s}$ or $d_s=\frac{[m+1]}{[m-s+1]}\qchoose{\zeta+1}{ s}$, i.e.
$$
\qchoose
{\zeta}{ s}+\frac{[\zeta+m-s+2]}{[m-s+1]}\qchoose{\zeta}{ s-1}=\frac{[m+1]}{[m-s+1]}\qchoose{\zeta+1}{  s}
$$
or 
$$
[m-s+1]\qchoose
{\zeta}{ s}+[\zeta+m-s+2] \qchoose{\zeta}{ s-1}=[m+1]\qchoose{\zeta+1}{  s}
$$
which follows from Lemma \ref{lemma-easypeasy} ($a=m+1$, $b=\zeta+1$, $c=s$). 
\end{proof}

\begin{lemma} \label{lemma-easypeasy} Let $a,b\in\DZ$ and $c\ge0$. Then we have
$$
[a]\qchoose{b}{c}=[a-c]\qchoose{b-1}{c}+[a+b-c]\qchoose{b-1}{c-1}.
$$
\end{lemma}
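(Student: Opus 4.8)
The plan is to reduce this quantum-binomial identity to a two-term identity among quantum integers, and then to verify that identity by a direct expansion in powers of $v$.

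The first step is to clear denominators. Since $\qchoose{n}{r}=\frac{[n][n-1]\cdots[n-r+1]}{[1][2]\cdots[r]}$ is a genuine element of $\SZ$, writing $[c]!:=[1][2]\cdots[c]$ one has the identities $[c]!\,\qchoose{b}{c}=[b][b-1]\cdots[b-c+1]$, $[c]!\,\qchoose{b-1}{c}=[b-1][b-2]\cdots[b-c]$ and $[c]!\,\qchoose{b-1}{c-1}=[c]\,[b-1][b-2]\cdots[b-c+1]$ in $\SZ$ (for $c=0$ the asserted identity is trivial, so I assume $c\ge 1$). Setting $P:=[b-1][b-2]\cdots[b-c+1]$, an empty product equal to $1$ when $c=1$, and using $[b][b-1]\cdots[b-c+1]=[b]\,P$ and $[b-1][b-2]\cdots[b-c]=[b-c]\,P$, multiplying the claimed identity through by $[c]!$ turns it into
$$[a]\,[b]\,P=\bigl([a-c]\,[b-c]+[c]\,[a+b-c]\bigr)P .$$
I would deliberately multiply out rather than cancel $P$, because $P$ can vanish for special integer values of $b$ and $c$; since $\SZ$ is an integral domain and $[c]!\neq 0$, it then suffices to prove the two-term identity $[a][b]=[a-c][b-c]+[c][a+b-c]$.

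The final step is to verify this last identity, which is an elementary special case of the $q$-Pascal/Vandermonde relations. Multiplying by $(v-v^{-1})^2$ and substituting $[n]=\frac{v^{n}-v^{-n}}{v-v^{-1}}$ turns both sides into explicit Laurent polynomials; on the right the terms $v^{a+b-2c}$ and $v^{-a-b+2c}$ coming from $[a-c][b-c]$ and $[c][a+b-c]$ cancel against each other, leaving $v^{a+b}-v^{a-b}-v^{-a+b}+v^{-a-b}$, which is precisely $(v-v^{-1})^2[a][b]$. Dividing back by $(v-v^{-1})^2$ in the domain $\SZ$, and then by $[c]!$ in the previous display, yields the statement of Lemma~\ref{lemma-easypeasy}. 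There is no genuine obstacle here; the only points requiring a little care are the possible vanishing of the quantum binomial coefficients — handled by multiplying through by $[c]!$ rather than cancelling — and the trivial degenerate case $c=0$.
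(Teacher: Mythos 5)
Your proof is correct and follows essentially the same route as the paper's: multiply through by $[c]^!$ to clear denominators, observe that a common factor $[b-1]\cdots[b-c+1]$ reduces the claim to the two-term identity $[a][b]=[a-c][b-c]+[a+b-c][c]$, and verify that identity by multiplying by $(v-v^{-1})^2$ and expanding. The explicit handling of the degenerate case $c=0$ and the remark that one should derive the full identity from the two-term one (rather than attempt a cancellation of a possibly vanishing factor) are small refinements over what the paper writes, but the argument is the same.
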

\begin{proof}
We multiply both sides with $[c]^!$ and arrive at the equivalent equation
$$
[a][b]\cdots[b-c+1]=[a-c][b-1]\cdots[b-c]+[a+b-c][c][b-1]\cdots[b-c+1].
$$
Both sides are divisible by $[b-1]\cdots[b-c+1]$, hence the above follows from the equation
$$
[a][b]=[a-c][b-c]+[a+b-c][c].
$$
Multiplying this equation by $(v-v^{-1})^2$ we obtain the equivalent equation 
$$
(v^{a}-v^{-a})(v^{b}-v^{-b})=(v^{a-c}-v^{-(a-c)})(v^{b-c}-v^{-(b-c)})+(v^{a+b-c}-v^{-(a+b-c)})(v^c-v^{-c})
$$
The right hand side is
$$
v^{a+b-2c}-v^{a-b}-v^{-a+b}+v^{-a-b+2c}+v^{a+b}-v^{a+b-2c}-v^{-a-b+2c}+v^{-a-b},
$$
which simplifies to  $-v^{a-b}-v^{-a+b}+v^{a+b}+v^{-a-b}$. This equals the left hand side. 
\end{proof}


\begin{thebibliography}{GKM98}
\bibitem[AM]{AM}  Andersen, Henning Haahr;  Mazorchuk, Volodymyr, {\em Category $\CO$ for quantum groups}, J. Eur. Math. Soc. (JEMS) 17 (2015), no. 2, 405--431. 

\bibitem[APW]{APW}  Andersen, Henning Haahr; Polo, Patrick; Wen, Ke Xin {\em Representations of quantum algebras}, Invent. Math. 104 (1991), no. 1, 1--59.

\bibitem[F]{LefOp} Fiebig, Peter, \emph{Lefschetz operators, Hodge-Riemann forms, and representations}, preprint, {\tt arXiv:1912.07995}, to appear in International Mathematics Research Notices.

\bibitem[J1]{JanMhG}
Jantzen, Jens Carsten, {\em Moduln mit einem h{\"o}chsten Gewicht}, Lecture Notes in Mathematics {\bf 750}, Springer, 1979.

\bibitem[J2] {JanAG} \bysame,     {\em Representations of algebraic groups}, Mathematical Surveys and Monographs { \bf 107}, {Second edition}, {American Mathematical Society, Providence, RI}, {2003}. 

\bibitem[J3]{JanQG} \bysame, {\em Lectures on quantum groups}, 
Graduate Studies in Mathematics, {\bf 6}. American Mathematical Society, Providence, RI, 1996.

\bibitem[L]{Lus90} Lusztig, George, {\em Quantum groups at roots of $1$},   Geometriae Dedicata, vol. 35,   (1990), p. 89--114.
\end{thebibliography}
\end{document}